\documentclass[11pt]{article}
\usepackage{graphicx}
\usepackage{amssymb,amsmath,amsthm,stmaryrd}
\usepackage[latin1]{inputenc}
\usepackage[english]{babel}
\usepackage{color}
\usepackage{vmargin, enumerate}
\pagestyle{plain}

\linespread{1.15}

\def\Z{\mathbb{Z}}

\def\le{\leqslant}
\def\ge{\geqslant}

\def\eps{\varepsilon}

\def\->{\rightarrow}
\def\<{\langle}
\def\>{\rangle}

\newtheorem{theorem}{Theorem}[section]
\newtheorem{lemma}[theorem]{Lemma}
\newtheorem{proposition}[theorem]{Proposition}

\newtheorem{obs}[theorem]{Observation}

\newtheorem{conjecture}[theorem]{Conjecture}

\theoremstyle{remark}

\theoremstyle{remark}
\theoremstyle{remark}

\newcommand{\ssq}{\subseteq}
\newcommand{\subgp}[1]{\langle{#1}\rangle}

\begin{document}

\title{Subset sums in abelian groups}
\date{ }
\author{
Eric Balandraud
\thanks{IMJ, \'{E}quipe Combinatoire et Optimisation, Universit\'{e} Pierre et Marie Curie~(Paris $6$), $4$ place Jussieu, 75005 Paris, France, email: \texttt{balandraud@math.jussieu.fr}.}
\and
Benjamin Girard
\thanks{IMJ, \'{E}quipe Combinatoire et Optimisation, Universit\'{e} Pierre et Marie Curie~(Paris $6$), $4$ place Jussieu, 75005 Paris, France, email: \texttt{bgirard@math.jussieu.fr}.}
\and
Simon Griffiths
\thanks{IMPA, Estrada Dona Castorina $110$, Rio de Janeiro, Brasil, $22460$-$320$, email: \texttt{sgriff@impa.br}. Research supported by CNPq Proc. 500016/2010-2.}
\and
Yahya ould Hamidoune
}

\maketitle
\vspace{-0.6cm}

\begin{center}\emph{To Yahya ould Hamidoune, an inspiration and a dear friend.}\end{center}\vspace{0.8cm}

\begin{abstract} 
Denoting by $\Sigma(S)$ the set of subset sums of a subset $S$ of a finite abelian group $G$, we prove that 
\[
|\Sigma(S)|\ge \frac{|S|(|S|+2)}{4}-1
\]
whenever $S$ is symmetric, $|G|$ is odd and $\Sigma(S)$ is aperiodic.  
Up to an additive constant of $2$ this result is best possible, and we obtain the stronger (exact best possible) bound in almost all cases.  
We prove similar results in the case $|G|$ is even.  
Our proof requires us to extend a theorem of Olson on the number of subset sums of anti-symmetric subsets $S$ from the case of $\mathbb{Z}_p$ to the case of a general finite abelian group.
To do so, we adapt Olson's method using a generalisation of Vosper's Theorem proved by Hamidoune and Plagne. 
\end{abstract}

\section{Introduction}
\label{Section Introduction}

The study of the set of subset sums
\begin{equation}\label{def:Sigma}
\Sigma(S)\, =\, \left\{\sum_{x\in X}x:\, X\subseteq S\right\}\, 
\end{equation}
of a subset $S$ of a finite abelian group $G$ is well established within the field of Additive Number Theory and was a recurring theme in the research of Yahya ould Hamidoune through the years.  His contributions here, and on the related problem of the restricted sumset, have greatly increased our understanding.

The study of subset sums may be traced back to the 1964 paper of Erd\H os and Heilbronn~\cite{EH64}.  They consider the question of determining the minimum $\ell\in \mathbb{N}$ such that every subset $S\subseteq \mathbb{Z}_p\setminus \{0\}$ ($p$ prime) with $|S|\ge\ell$ covers $\mathbb{Z}_p$ with its subset sums, i.e., satisfies $\Sigma(S)=\mathbb{Z}_p$.  They proved that $\Sigma(S)=\mathbb{Z}_p$ provided $|S|\ge 3\sqrt{6}\sqrt{p}$.

The same question may be considered in an arbitrary finite abelian group.  In fact, in this case the \emph{critical number} of a finite abelian group $G$,
\[
\mathsf{cr}(G)=\min\{\ell: \Sigma^*(S)=G \,\, \text{for all}\, S\subseteq G\setminus \{0\},\, |S|\ge\ell\}\, ,
\]
is defined in terms of $\Sigma^*(S)$, the set of all non-empty subset sums of $S$, but this difference is not of any great importance to the present discussion.
Improving on the result of Erd\H os and Heilbronn~\cite{EH64}, Olson~\cite{Olson68} proved that $\mathsf{cr}(\mathbb{Z}_p)\le 2\sqrt{p}$.  The precise result that $\mathsf{cr}(\mathbb{Z}_p)=\lfloor 2(\sqrt{p-2})\rfloor$ for all primes $p\ge 3$ follows from Theorem 4.2 and Example 4.2 of Dias da Silva and Hamidoune~\cite{DH94} (using the observation that $4p-7$ is not a square for any prime $p\ge 3$).  The critical number is now known precisely for every finite abelian group, see the article of Freeze, Gao and Geroldinger~\cite{FGG09} and the references contained therein.

A closely related problem to the determination of $\mathsf{cr}(G)$ is the problem of proving bounds on $|\Sigma(S)|$.  Indeed, Erd\H os and Heilbronn~\cite{EH64} proved their bound on $\mathsf{cr}(\mathbb{Z}_p)$ by proving a quadratic lower bound on $|\Sigma(S)|$ for subsets $S\subseteq \mathbb{Z}_p$ and Olson~\cite{Olson68} improved on their result by proving the following lower bound on $|\Sigma(S)|$.  We remark that the bound is best possible in almost all cases, the exceptional case being the case when $|\Sigma(S)|$ is almost as large as $|G|/2$ in which case  $\xi(S)=0$.  For a subset $S$ of a finite abelian group $G$, we denote by $\subgp{S}$ the subgroup generated by $S$, and the parameter $\xi(S)$ is defined to be identically $1$ if $|S|$ is even and as follows in the case $|S|$ is odd:
\[
\xi(S)\, = 
\begin{cases}
1 &\qquad \text{ if } 2|S|^2 +3|S| \le 2|\subgp{S}|+5 \\ 
0 &\qquad \text{ if } 2|S|^2 +3|S| > 2|\subgp{S}|+5 \, .
\end{cases}
\]

\begin{theorem}[Olson]\label{thm:Olson}
Let $G=\mathbb{Z}_{p}$ where $p$ is prime, and let $S$ be a subset of $G$ such that $S\cap (-S)=\emptyset$. Then one of the following holds.
\begin{itemize}
\item[$(i)$]
\[
|\Sigma(S)|\ge \frac{|S|(|S|+1)}{2}+\xi(S) \, .
\]
\item[$(ii)$]
\[
|\Sigma(S)|>\frac{p}{2}\, .
\]
\end{itemize}
\end{theorem}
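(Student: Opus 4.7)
The plan is to induct on $k := |S|$, enumerating $S$ as $s_1, s_2, \ldots, s_k$ in an adaptively chosen order and tracking the partial sumsets $\Sigma_j := \Sigma(\{s_1, \ldots, s_j\})$ via the recursion $\Sigma_j = \Sigma_{j-1} \cup (\Sigma_{j-1} + s_j)$. The base case $k = 1$ is immediate, since $\Sigma(S) = \{0, s_1\}$ has size $2 = 1\cdot 2/2 + 1$.

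\textbf{Main claim.} Either $|\Sigma_j| > p/2$ at some step (in which case conclusion $(ii)$ holds), or the ordering can be chosen so that $|\Sigma_j| - |\Sigma_{j-1}| \ge j$ for every $j \ge 1$. Since $|\Sigma_0| = 1$, this telescopes to $|\Sigma(S)| \ge 1 + k(k+1)/2$, which is the desired bound when $\xi(S) = 1$. The claim at step $j = 2$ can be verified directly by counting $\{0, s_1, s_2, s_1+s_2\}$ and using $S \cap (-S) = \emptyset$ to rule out $s_1 + s_2 = 0$; for larger $j$ the argument is inductive.

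\textbf{Key lemma (heart of the proof and main obstacle).} The crux is establishing the incremental bound of $j$ per step. A naive application of Cauchy--Davenport to $\Sigma_{j-1} + \{0, s_j\}$ yields only a gain of $1$, so extracting a gain of $j$ requires exploiting more structure. I would consider, for each candidate $s \in S \setminus \{s_1, \ldots, s_{j-1}\}$, the overlap $|\Sigma_{j-1} \cap (\Sigma_{j-1} + s)|$, and argue that some choice of $s$ achieves overlap at most $|\Sigma_{j-1}| - j$. A useful starting point is the averaging identity $\sum_{s \in \mathbb{Z}_p} |\Sigma_{j-1} \cap (\Sigma_{j-1} + s)| = |\Sigma_{j-1}|^2$, which for $|\Sigma_{j-1}| \le p/2$ bounds the average overlap by $|\Sigma_{j-1}|/2$. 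However, localizing this bound from all of $\mathbb{Z}_p$ to the specific set $S$, and sharpening the gain from $|\Sigma_{j-1}|/2$ to $j$, is the delicate step. Here the anti-symmetry $S \cap (-S) = \emptyset$ must be invoked decisively, combined with the internal symmetry $\Sigma_{j-1} = \sigma_{j-1} - \Sigma_{j-1}$ (where $\sigma_{j-1} = s_1 + \cdots + s_{j-1}$), which restricts how the translates $\Sigma_{j-1} + s$ for $s \in S$ can align with $\Sigma_{j-1}$.

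\textbf{The refinement via $\xi(S)$ and the dichotomy.} The extra constant $\xi(S)$ and the precise transition to case $(ii)$ would emerge from a finer analysis of when near-equality holds in the incremental step. By Vosper's theorem, near-equality in the Cauchy--Davenport bound forces $\Sigma_{j-1}$ to be close to an arithmetic progression, and exploiting this structural information, together with the anti-symmetry of $S$, either yields the extra unit needed to meet the $\xi(S) = 1$ bound or pushes $|\Sigma_j|$ past $p/2$. The condition $2|S|^2 + 3|S| \le 2|\langle S \rangle| + 5$ in the definition of $\xi(S)$ should appear naturally in this case analysis as the threshold below which the sharper bound survives the inverse step.
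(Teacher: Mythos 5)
Your inductive skeleton is the right one, and it is essentially the Erd\H{o}s--Heilbronn/Olson strategy that the paper itself generalises: order $S$, use $|\Sigma_j|\ge|\Sigma_{j-1}|+\lambda_{B}(s_j)$ with $\lambda_B(x)=|(B+x)\setminus B|$, and telescope. But the entire content of the theorem lives in the step you label the ``main obstacle'' and then leave open: producing, at each stage, an element of $S$ itself with $\lambda_B(x)\ge j$. As written, your proposal is an accurate description of what needs to be proved, not a proof of it.

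The concrete reason your suggested route does not close is that the averaging identity $\sum_{s\in\mathbb{Z}_p}|\Sigma_{j-1}\cap(\Sigma_{j-1}+s)|=|\Sigma_{j-1}|^2$ averages over all $p$ translates, and there is no mechanism in your outline to transfer that average to the $\le k$ elements of $S$. The localized version of this averaging is the inequality $\sum_{x\in C}\lambda_B(x)\ge|B|\,(|C|-|B|+1)$ for $0\notin C$ (inequality~\eqref{eq:clique} in the paper), but this is vacuous unless $|C|\ge|B|$; since $|B|\approx j^2/2$ while $|S|=k$, applying it to $C=S$ gives nothing. Olson's resolution --- which is the missing idea --- is to take $C$ inside the union of the iterated sumsets $\hat S, 2\hat S,\dots$ of $\hat S=S\cup\{0\}\cup(-S)$, use Cauchy--Davenport (plus a parity trick, since $j\hat S$ is symmetric about a point and $p$ is odd, giving $|j\hat S|\ge j(|\hat S|+1)-1$) to guarantee $C$ can be chosen large with most elements at low ``levels'' $j$, and then use sub-additivity $\lambda_B(x_1+\dots+x_j)\le j\max_{x\in\hat S}\lambda_B(x)$ to convert the lower bound on $\sum_{c\in C}\lambda_B(c)$ into a lower bound on $\max_{x\in S}\lambda_B(x)$ of order $|S|$. (This is exactly the mechanism of Lemmas~\ref{lemme lambda general} and~\ref{lemme lambda}, with the Vosper/Hamidoune--Plagne machinery replacing Cauchy--Davenport outside $\mathbb{Z}_p$.) Your appeal to Vosper's theorem for the $\xi(S)$ refinement is likewise only a plausible guess: in the actual argument the threshold $2|S|^2+3|S|\le2|\subgp{S}|+5$ falls out of optimising the parameter $t$ in that counting argument, not from an inverse theorem applied to $\Sigma_{j-1}$. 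Also note that the AP-like configurations must be handled separately as base cases of the induction rather than absorbed into the incremental step.
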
\vspace{0.3cm}

In the case of a general finite abelian group, non-trivial subgroups present an obstacle to extending Olson's Theorem.  
For this reason we consider the following to be the natural extension of Olson's Theorem to the case of a general finite abelian group.  

\begin{theorem}\label{thm:main}
Let $G$ be a finite abelian group, and let $S\subseteq G$ be such that $S\cap (-S)=\emptyset$ and $|S|\ge 2$. Then one of the following holds.
\begin{itemize}
\item[($i$)] 
\[
|\Sigma(S)|\ge \frac{|S|(|S|-1)}{2}+3\, .
\]
\item[$(ii)$] There is a non-empty subset $S'\subseteq S$ for which
\[
|\Sigma(S')|>\frac{|\subgp{S'}|}{2}\, .
\]
\end{itemize}
Furthermore, if $|G|$ is odd then property ($i$) may be replaced by
\begin{itemize}
\item[($i'$)]
\[
|\Sigma(S)|\ge \frac{|S|(|S|+1)}{2}+\xi(S) \, .
\]
\end{itemize}
\end{theorem}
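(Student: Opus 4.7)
The plan is to prove Theorem \ref{thm:main} by induction on $|S|$, using Kneser's theorem and the Hamidoune--Plagne generalization of Vosper's theorem to handle the near-equality regimes. The base case $|S|=2$ is immediate: writing $S=\{a,b\}$, the hypothesis $S\cap(-S)=\emptyset$ forces the four elements $0,a,b,a+b$ to be pairwise distinct, so $|\Sigma(S)|=4=|S|(|S|-1)/2+3$, which verifies (i) (and (i$'$) in the odd-order case as well).

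For the inductive step with $k:=|S|\ge 3$, pick $x\in S$, set $S'':=S\setminus\{x\}$, and consider the decomposition $\Sigma(S)=\Sigma(S'')\cup(x+\Sigma(S''))=\{0,x\}+\Sigma(S'')$. Applying the inductive hypothesis to $S''$, we may assume (ii) fails for every subset of $S''$, so $|\Sigma(S'')|\ge(k-1)(k-2)/2+3$. It then suffices, in order to close the induction, to gain $k-1$ new elements on passing from $\Sigma(S'')$ to $\Sigma(S)$; if some $x\in S$ achieves this, (i) follows at once.

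The hard case is that for \emph{every} $x\in S$ the gain is strictly less than $k-1$; writing $\Sigma(S)=\{0,x\}+\Sigma(S''_x)$ with $S''_x:=S\setminus\{x\}$ then places us in the near-equality regime of Kneser's inequality. Here I would invoke the Hamidoune--Plagne theorem to conclude that $\Sigma(S''_x)$ is highly structured: either it is contained in (or is a near-complete union of) cosets of some subgroup $H\le\subgp{S''_x}$ playing the role of a stabilizer of $\Sigma(S)$, or it is essentially an arithmetic progression in a cyclic quotient. The aim is then to show that some subset $S'\subseteq S$ produced from this structure satisfies $|\Sigma(S')|>|\subgp{S'}|/2$, furnishing alternative (ii). The principal obstacle I anticipate is exactly this structural step: in a general finite abelian group the possible stabilizers $H$ proliferate with the subgroup lattice, and one must verify that the rigid configurations forced by applying Hamidoune--Plagne to every $x\in S$ either combine into a single witness for (ii), or contradict each other (thereby ruling out the hard case entirely). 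This is the place where the classical Vosper-based argument for $\mathbb{Z}_p$ genuinely fails, which is why the generalised version is needed.

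For the sharper conclusion (i$'$) when $|G|$ is odd, I would use the involution $X\mapsto S\setminus X$ on the power set of $S$, which realises on $\Sigma(S)$ the symmetry $y\mapsto\sigma-y$ with $\sigma:=\sum_{x\in S}x$. Since $|G|$ is odd, the equation $2y=\sigma$ has exactly one solution, so this involution has a single fixed point on $G$ and otherwise pairs the elements of $\Sigma(S)$; a parity/pairing argument running through the induction should supply the additional subset sums needed to upgrade the bound $|S|(|S|-1)/2+3$ to $|S|(|S|+1)/2+\xi(S)$. The boundary case $\xi(S)=0$, defined by $2|S|^2+3|S|>2|\subgp{S}|+5$, should correspond precisely to the regime in which $\Sigma(S)$ nearly fills a proper subgroup of $\subgp{S}$ and is correctly recorded as an exception to the $+1$ improvement.
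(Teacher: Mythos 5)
Your base case and the overall inductive frame (find $x\in S$ whose removal loses at least $k-1$ subset sums) match the paper's strategy, but the mechanism you propose for the hard case has a genuine gap. You write $\Sigma(S)=\{0,x\}+\Sigma(S\setminus\{x\})$ and propose that if every $x$ gains fewer than $k-1$ elements, you are ``in the near-equality regime of Kneser's inequality'' and can invoke Hamidoune--Plagne to derive structure for $\Sigma(S\setminus\{x\})$. This does not work: for a sumset $X+Y$ with $|X|=2$, Kneser/Vosper-type critical pair theorems only yield structural information when $|X+Y|\le |X|+|Y|$, i.e.\ when the gain is at most $1$. A gain deficit anywhere up to $k-2$ is far outside the reach of any critical pair theorem, so the ``highly structured'' conclusion you want for $\Sigma(S\setminus\{x\})$ simply is not available. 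The paper's route is different in exactly this respect: Hamidoune--Plagne is applied not to the subset-sum sets but to $\hat{S}=S\cup\{0\}\cup(-S)$ itself (which is legitimate because Lemma~\ref{Lemme 2S+1} gives $|\hat S|\le|\Sigma(S)|\le|G|/2$ once ($ii$) is assumed to fail). The resulting dichotomy --- $\phi(\hat S)$ an arithmetic progression or a Vosper subset of $G/H$ --- splits the proof into a directly-computed base case (Proposition~\ref{prop:AP}) and an inductive case in which the Vosper property forces the iterated sumsets $j\hat S$ to expand (``faithfulness''), and the gain $\max_{x\in S}\lambda_{\Sigma(S)}(x)\ge s-1$ is then extracted by Olson's averaging argument: the clique inequality \eqref{eq:clique} bounds $\sum_{c\in C}\lambda_B(c)$ from below over a large set $C\subseteq\bigcup_j j\hat S$, while sub-additivity \eqref{eq:x+y} bounds it from above by a multiple of $\max_x\lambda_B(x)$. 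None of this averaging machinery appears in your outline, and without it (or a substitute) the inductive step does not close.

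Your idea for the odd-order refinement is also not the one that does the work. The involution $X\mapsto S\setminus X$ does show $\Sigma(S)$ is symmetric about $\tfrac{1}{2}\sum_{x\in S}x$, but that only constrains the parity of $|\Sigma(S)|$ according to whether the unique fixed point lies in $\Sigma(S)$, which is not controlled. The parity trick actually used is applied to the iterated sets $j\phi(\hat S)$: these are symmetric about $0$ and contain $0$, so in a group of odd order they have odd cardinality, which rules out the value $|G/H|-1$ in the Vosper lower bound and upgrades ``faithful'' to ``super faithful'' ($|j\hat S|\ge j(|\hat S|+1)-1$); this stronger expansion feeds into the averaging argument (Lemma~\ref{lem:VI}) to produce the extra $+1$ and the $\xi(S)$ correction. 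I would encourage you to rework the inductive step around $\lambda_B$ and the expansion of $j\hat S$ rather than around critical pairs for $\{0,x\}+\Sigma(S\setminus\{x\})$.
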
\vspace{0.3cm}

We now describe a consequence of Theorem~\ref{thm:main} (see Theorem~\ref{thm:sym} below) that was in fact our main motivation for proving it.

The fact that $|\Sigma(S)|$ exhibits quadratic growth as a function of $|S|$ was established by Erd\H os and Heilbronn for subsets $S\subseteq \mathbb{Z}_p$.  The analogous result for general finite abelian groups was established by DeVos, Goddyn, Mohar and \v{S}\'{a}mal \cite{DGM07}.  We say that a subset $X$ of a finite abelian group $G$ is \emph{aperiodic} if the equality $X+g=X$ is satisfied only for $g=0$.

\begin{theorem}[DeVos, Goddyn, Mohar and \v{S}\'{a}mal] \label{thm:dvgms} Let $G$ be a finite abelian group, and $S\subseteq G\setminus \{0\}$ a subset for which $\Sigma(S)$ is aperiodic.  Then $|\Sigma(S)|\ge |S|^2/64$.\end{theorem}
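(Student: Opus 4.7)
The plan is to reduce to Theorem~\ref{thm:main} by passing to a large antisymmetric subset of $S$. The first observation is that $S$ cannot contain any element of order $2$: setting $T=\{s\in S:\,2s=0\}$, every $t\in T$ is self-inverse, so $\Sigma(T)$ is closed under addition and coincides with the subgroup $\langle T\rangle$. The factorisation $\Sigma(S)=\Sigma(T)+\Sigma(S\setminus T)$ then shows that $\langle T\rangle$ stabilises $\Sigma(S)$, and aperiodicity forces $T=\emptyset$. Choosing one element from each pair $\{s,-s\}\subseteq S$ and including all $s\in S$ with $-s\notin S$ yields a subset $A\subseteq S$ with $A\cap(-A)=\emptyset$ and $|A|\ge|S|/2$.

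Assuming $|A|\ge 2$ (else $|S|\le 2$ and the bound is trivial), we apply Theorem~\ref{thm:main} to $A$. If its alternative $(i)$ holds, then
\[
|\Sigma(S)|\ \ge\ |\Sigma(A)|\ \ge\ \frac{|A|(|A|-1)}{2}+3\ \ge\ \frac{|S|^2}{8}-\frac{|S|}{4}+3,
\]
which is comfortably larger than $|S|^2/64$. Otherwise alternative $(ii)$ gives a non-empty $S'\subseteq A$ satisfying $|\Sigma(S')|>|H|/2$ with $H=\langle S'\rangle$. If $|H|\ge|S|^2/32$ we are done immediately, since $|\Sigma(S)|\ge|\Sigma(S')|>|H|/2\ge|S|^2/64$.

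So suppose $|H|<|S|^2/32$. Here the plan is to induct on $|G|$, projecting via $\pi:G\to G/H$ and working with $\bar S=\pi(S\setminus S')$. The decomposition $\Sigma(S)=\Sigma(S')+\Sigma(S\setminus S')$ implies $\pi(\Sigma(S))=\Sigma(\bar S)$, and each fibre of $\pi$ above a point of $\Sigma(\bar S)$ contains a translate of $\Sigma(S')$, hence more than $|H|/2$ points of $\Sigma(S)$. The set $\Sigma(\bar S)$ is itself aperiodic in $G/H$, because a non-trivial stabiliser would pull back through the decomposition to stabilise $\Sigma(S)$ in $G$. By induction $|\Sigma(\bar S)|\ge|\bar S|^2/64$, and combining yields $|\Sigma(S)|>(|H|/2)\cdot|\bar S|^2/64$.

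The main obstacle is completing this last step: one must relate $|\bar S|$ to $|S|$ despite the possibility of up to $|H|$ elements of $S\setminus S'$ collapsing into a single fibre of $\pi$. Converting the product bound into the target $|S|^2/64$ requires a finer analysis, possibly a subcase split depending on how $|H|$ compares with $|S|$ and exploiting the constraint $|H|\ge 2|S'|+1$ forced by the antisymmetry of $S'$. A fully smooth induction may well require augmenting the inductive hypothesis with the Kneser-type inequality $|\Sigma(S)|\ge|\Sigma(S')|+|\Sigma(S\setminus S')|-1$ (valid because $\Sigma(S)$ has trivial stabiliser), or a slightly different choice of $S'$ within the structure produced by Theorem~\ref{thm:main}.
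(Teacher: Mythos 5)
First, note that the paper does not prove Theorem~\ref{thm:dvgms} at all: it is quoted from DeVos, Goddyn, Mohar and \v{S}\'{a}mal \cite{DGM07}, so there is no internal proof to compare against, and any derivation of it from Theorem~\ref{thm:main} would be new content. Your reduction is fine up to a point: the elimination of order-two elements, the extraction of an antisymmetric $A\subseteq S$ with $|A|\ge |S|/2$, the disposal of alternative $(i)$, and the disposal of alternative $(ii)$ when $|\langle S'\rangle|\ge |S|^2/32$ are all correct. But the remaining case is precisely where the whole difficulty of the theorem lives, and your inductive scheme has two genuine gaps there.

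The first gap is the one you acknowledge: after projecting to $G/H$, up to $|H|$ elements of $S\setminus S'$ can collapse into a single fibre, so $|\bar S|$ can be as small as roughly $|S|/|H|$, and the combined bound $(|H|/2)\cdot|\bar S|^2/64\approx |S|^2/(128|H|)$ falls short of $|S|^2/64$ by a factor of order $|H|$. For a concrete failure of the method, take $S\setminus S'$ contained in two $H$-cosets with $|S|$ of order $|H|$: then $|\bar S|=2$ and your bound gives only $O(|H|)$ against a target of order $|H|^2$; nothing in the structure supplied by Theorem~\ref{thm:main}$(ii)$ rules this out. The second gap is the aperiodicity transfer: a period $\bar g$ of $\Sigma(\bar S)$ in $G/H$ does not lift to a period of $\Sigma(S)$, because the fibres of $\Sigma(S)$ over its projection are only known to contain more than $|H|/2$ points each, not to be full (or mutually translated) $H$-cosets; the paper's Observation~\ref{obs:aperiodic} applies only when the subgroup in question is the actual period of $\Sigma(S)$, which here is trivial. (A smaller slip: $\pi(\Sigma(S))=\Sigma(\bar S)$ should be $\supseteq$, since distinct elements of $S\setminus S'$ with equal images create sums not obtainable from the set $\bar S$; this is harmless for the lower bound but is a symptom of the same collapsing problem.) Handling the situation where many elements of $S$ are crowded into few cosets of a small subgroup is exactly the content of the argument in \cite{DGM07}, and it is not recovered by the steps you describe.
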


It is believed that $\frac{1}{64}$ may be replaced by $\frac{1}{4}$.  The natural extremal example that shows that $\frac{1}{4}$ would be best possible is the subset $S=\{\pm 1,\dots ,\pm s\}\subseteq \mathbb{Z}_n$, where $n> s(s+1)+1$.  This set $S$ has $|S|=2s$ and $|\Sigma(S)|=s(s+1)+1=s^2+s+1$.  We note that the set $S$ is symmetric (i.e., $S=-S$) and remark that we believe in general that such extremal examples should be symmetric (or very close to symmetric).  This belief is supported by the fact~\cite{Gr09}, that we may replace the fraction $\frac{1}{64}$ of Theorem~\ref{thm:dvgms} by $\frac{1}{4}-o(1)$ in general and by $\frac{1}{2}-o(1)$ in the case that $S\cap(-S)=\emptyset$.  Indeed, by adapting the approach of~\cite{Gr09} slightly one obtains that $\frac{1}{64}$ may be replaced by $\frac{1}{4}$ provided that $S$ is large and far from being symmetric.

\begin{theorem}\label{thm:nonsym}  For all $\epsilon >0$ there exists a constant $n_0=n_0(\epsilon)$ such that the following holds.  Let $G$ be a finite abelian group, and $S\subseteq G\setminus \{0\}$ a subset with $|S\triangle (-S)|\ge \eps |S|$, $|S|\ge n_0$ and for which $\Sigma(S)$ is aperiodic.  Then $|\Sigma(S)|\ge (\frac{1}{4}+\epsilon^2)|S|^2$.\end{theorem}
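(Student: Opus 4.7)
My approach is to decompose $S$ into its symmetric and anti-symmetric parts:\ set $A := S \cap (-S)$ (so $A = -A$) and $B := S \setminus (-S)$ (so $B \cap (-B) = \emptyset$). The hypothesis $|S \triangle (-S)| \ge \eps |S|$ translates to $|B| \ge \eps |S|/2$. Since $A$ and $B$ are disjoint, $\Sigma(S) = \Sigma(A) + \Sigma(B)$; any period of either summand would stabilise the sumset, so the aperiodicity of $\Sigma(S)$ forces both $\Sigma(A)$ and $\Sigma(B)$ to be aperiodic.

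I would then combine two ingredients. Kneser's theorem, together with aperiodicity of $\Sigma(S)$, yields
\[
|\Sigma(S)| \;=\; |\Sigma(A) + \Sigma(B)| \;\ge\; |\Sigma(A)| + |\Sigma(B)| - 1.
\]
The two quantitative density bounds already established in~\cite{Gr09} then give $|\Sigma(B)| \ge (\tfrac{1}{2} - o(1))|B|^2$ (the anti-symmetric case, since $B \cap (-B) = \emptyset$ and $\Sigma(B)$ is aperiodic) and $|\Sigma(A)| \ge (\tfrac{1}{4} - o(1))|A|^2$ (the general case), both valid as soon as $|S| \ge n_0(\eps)$ ensures the relevant part is large; if $|A|$ is small one just uses $|\Sigma(A)| \ge 1$. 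Substituting,
\[
|\Sigma(S)| \;\ge\; \tfrac{1}{4}|A|^2 + \tfrac{1}{2}|B|^2 - o(|S|^2),
\]
which suffices to prove the theorem as soon as $|B|$ is a sufficiently large fraction of $|S|$ (roughly $|B| \ge \tfrac{2}{3}|S|$).

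The main obstacle is the intermediate regime $\eps|S|/2 \le |B| \ll |S|$, where the elementary combination above falls just below $\tfrac{1}{4}|S|^2$. To close this gap I would adapt the stability component underlying the arguments of~\cite{Gr09}:\ when $|\Sigma(A)|$ is within $o(|A|^2)$ of its extremal value $\tfrac{1}{4}|A|^2$, a Freiman-type rigidity forces $A$ to be Freiman-close to the canonical symmetric interval $\{\pm 1, \ldots, \pm s\}$ (with $2s = |A|$), so that $\Sigma(A)$ is itself essentially an interval $[-M, M]$ of length $\sim |A|^2/4$. Because each $b \in B$ avoids $\pm A$, the subset sums of $B$ then contribute $\Omega(|B|^2)$ \emph{genuinely new} translates of $\Sigma(A)$ to $\Sigma(A) + \Sigma(B)$, producing an extra $\Omega(|B|^2) = \Omega(\eps^2|S|^2)$ on top of $\tfrac{1}{4}|S|^2$. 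The technically most delicate step is making this rigidity quantitative enough to realise the $\eps^2$ improvement uniformly across all parameter regimes; this is precisely where the ``slight adaptation'' of~\cite{Gr09} promised by the authors enters.
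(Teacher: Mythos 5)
The first thing to say is that the paper contains no proof of Theorem~\ref{thm:nonsym} to compare yours against: it is asserted to follow ``by adapting the approach of~\cite{Gr09} slightly,'' and no argument is given. Judged on its own terms, the first half of your sketch is correct: the decomposition $A=S\cap(-S)$, $B=S\setminus(-S)$, the identity $\Sigma(S)=\Sigma(A)+\Sigma(B)$, the propagation of aperiodicity to both summands, the Kneser estimate, and the importation of the $\tfrac12-o(1)$ and $\tfrac14-o(1)$ bounds from~\cite{Gr09} are all fine, and your computation that this only closes the case $|B|\gtrsim\tfrac23|S|$ is accurate. But that means the entire content of the theorem --- including every configuration close to the extremal one --- lives in the regime $|B|<\tfrac23|S|$, which your sketch defers to an unproved rigidity statement.

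In that regime there is a genuine quantitative gap, not just a missing technical lemma. You claim the structured case produces ``an extra $\Omega(|B|^2)$ on top of $\tfrac14|S|^2$,'' but the base you actually control is $|\Sigma(A)|\approx\tfrac14|A|^2=\tfrac14|S|^2-\tfrac12|S||B|+\tfrac14|B|^2$; the deficit $\tfrac12|S||B|\ge\tfrac{\eps}{4}|S|^2$ is an order of magnitude larger than the $\Omega(\eps^2|S|^2)$ you propose to gain. So the contribution of $B$ must be shown to be at least $\tfrac12|S||B|+\Omega(|B|^2)$ --- the full cross term that Kneser discards --- and this is exactly the step your sketch does not carry out; relatedly, the stability dichotomy must be run at slack $c\eps|A|^2$ rather than $o(|A|^2)$, a constant-gap stability theorem for subset sums that is not known to be contained in~\cite{Gr09}. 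Finally, note that the example $S=\{-s,\dots,-1\}\cup\{1,\dots,s+\beta\}\subseteq\mathbb{Z}_N$ ($N$ large) with $\beta=\lceil\eps|S|/2\rceil$ satisfies all the hypotheses, and $\Sigma(S)$ is exactly an interval of cardinality
\[
\frac{s(s+1)}{2}+\frac{(s+\beta)(s+\beta+1)}{2}+1=\Bigl(\tfrac14+\tfrac{\eps^2}{16}+o(1)\Bigr)|S|^2 .
\]
So no proof can reach the stated constant $\eps^2$ (the statement itself needs the constant weakened to $\eps^2/16$), and the same example shows that the cross term $\tfrac12|A||B|$ must be recovered \emph{exactly}, with essentially nothing to spare --- which is why the ``genuinely new translates'' step is the whole difficulty and cannot be waved through.
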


We hope it is now clear to the reader that symmetric sets $S\subseteq G$ are of particular interest.  We may deduce from Theorem~\ref{thm:main} the following bounds on the number of subset sums of symmetric sets.  For a symmetric set $S$ we write $\xi'(S)$ for $\xi(S')$ where $S'$ is any subset of $S$ with $|S'|=|S|/2$ and $S=S'\cup(-S')$.  Equivalently
\[
\xi'(S)\, = 
\begin{cases}
1 & \text{ if } \frac{1}{2}|S|^2 + \frac{3}{2}|S| \le 2|\subgp{S}|+5 \\ 
0 & \text{ if } \frac{1}{2}|S|^2 + \frac{3}{2}|S| > 2|\subgp{S}|+5 \, .
\end{cases}
\]

\begin{theorem}\label{thm:sym}  Let $G$ be a finite abelian group, and $S\subseteq G\setminus \{0\}$ a symmetric subset with $|S|\ge 4$ for which $\Sigma(S)$ is aperiodic.  Then 
\[
|\Sigma(S)|\ge \frac{|S|(|S|-2)}{4}+5 \, .
\]
Furthermore, if $|G|$ is odd then
\[
|\Sigma(S)|\ge \frac{|S|(|S|+2)}{4}+2\xi'(S)-1 \, .
\]
\end{theorem}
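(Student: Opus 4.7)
\emph{Plan of proof.} The idea is to write $S = S_+ \cup (-S_+)$ for an anti-symmetric set $S_+$, apply Theorem~\ref{thm:main} to $S_+$, and lift the resulting bound to $|\Sigma(S)|$ via Kneser's theorem.

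First I would verify that $S$ contains no element of order~$2$. Let $S_0 = \{g\in S : 2g=0\}$ and $H = \langle S_0\rangle$. Since the elements of $S_0$ are self-inverse, $\Sigma(S_0) = H$. Picking $S_+\subseteq S\setminus S_0$ to be a transversal for the pairs $\{g,-g\}$ in $S\setminus S_0$ gives the disjoint union $S\setminus S_0 = S_+\cup(-S_+)$. Decomposing each $X\subseteq S$ according to its intersections with $S_+$, $-S_+$ and $S_0$ yields
\[
\Sigma(S) \;=\; \bigl(\Sigma(S_+) - \Sigma(S_+)\bigr) + H,
\]
which is $H$-periodic. Aperiodicity of $\Sigma(S)$ therefore forces $H = \{0\}$, and since $0\notin S$ this means $S_0=\emptyset$. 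In particular, $|S|$ is even and $|S_+| = |S|/2 \ge 2$.

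Next I would apply Theorem~\ref{thm:main} to the anti-symmetric set $S_+$ and rule out its alternative~(ii). Suppose, for contradiction, that some non-empty $S''\subseteq S_+$ satisfies $|\Sigma(S'')| > \tfrac{1}{2}|\langle S''\rangle|$, and set $H' = \langle S''\rangle$. For every $h\in H'$, the subsets $\Sigma(S'')$ and $h+\Sigma(S'')$ of $H'$ each have size greater than $|H'|/2$, so they intersect; this forces $\Sigma(S'')-\Sigma(S'') = H'$. Since $S''\cap(-S'') = \emptyset$, the same decomposition argument as in the previous step gives $\Sigma(S) = H' + \Sigma(S\setminus(S''\cup(-S'')))$, which is $H'$-periodic. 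But $S''\ne\emptyset$ and $0\notin S$ force $H'\ne\{0\}$, contradicting aperiodicity. Thus alternative~(i) must hold for $S_+$, and in fact (i') holds when $|G|$ is odd.

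Finally I would invoke Kneser's theorem: aperiodicity of $\Sigma(S) = \Sigma(S_+) + (-\Sigma(S_+))$ means its stabiliser in $G$ is trivial, so Kneser's inequality gives $|\Sigma(S)| \ge 2|\Sigma(S_+)| - 1$. Substituting the bound from~(i) yields $|\Sigma(S)| \ge |S_+|(|S_+|-1)+5 = \tfrac{|S|(|S|-2)}{4}+5$, and substituting (i') yields $|\Sigma(S)| \ge \tfrac{|S|(|S|+2)}{4}+2\xi'(S)-1$ (using $\xi(S_+)=\xi'(S)$). The conceptual heart of the plan, which I expect to be the main obstacle, is recognising that both potential obstructions — the existence of order-$2$ elements in $S$ and alternative~(ii) of Theorem~\ref{thm:main} for $S_+$ — manifest as non-trivial periodicities of $\Sigma(S)$ and are therefore ruled out by our hypothesis; once this dichotomy is in place, Kneser's theorem supplies exactly the factor-of-two doubling needed to reach the target bounds.
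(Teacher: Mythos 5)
Your proposal is correct and follows essentially the same route as the paper: split $S=S_+\cup(-S_+)$, apply Theorem~\ref{thm:main} to $S_+$, rule out alternative $(ii)$ because it would make $\Sigma(S'\cup(-S'))$ (and hence $\Sigma(S)$) periodic, and double the bound via Kneser's theorem. The only cosmetic differences are that the paper dismisses order-$2$ elements by noting $\Sigma(\{x\})=\{0,x\}$ is already periodic (Observation~\ref{obs:aperiodic}) and invokes the prehistoric lemma where you inline its pigeonhole proof.
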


By considering the example $S=\{\pm 1,\dots ,\pm s\}\subseteq \mathbb{Z}_n$ given above we observe that the latter bound is best possible (except in the exceptional case that $\xi'(S)=0$).  We conjecture that this bound should hold even if the conditions that $S$ is symmetric and $|G|$ is odd are dropped.

\begin{conjecture} Let $G$ be a finite abelian group, and $S\subseteq G\setminus \{0\}$ a subset for which $\Sigma(S)$ is aperiodic.  Then 
\[
|\Sigma(S)|\ge \frac{|S|(|S|+2)}{4}+1 \, .
\]
\end{conjecture}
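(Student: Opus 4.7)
The plan is a dichotomy based on how close $S$ is to being symmetric. Write $S = T \cup U$ disjointly, where $T = S \cap (-S)$ is the symmetric part and $U = S \setminus (-S)$ satisfies $U \cap (-U) = \emptyset$, so that $\Sigma(S) = \Sigma(T) + \Sigma(U)$ as a Minkowski sum. A small but essential preliminary is that the stabiliser of $\Sigma(T)$ is contained in that of $\Sigma(S)$: if $H$ fixes $\Sigma(T)$ setwise then $\Sigma(S) + H = \Sigma(T) + H + \Sigma(U) = \Sigma(T) + \Sigma(U) = \Sigma(S)$. Hence aperiodicity of $\Sigma(S)$ passes to $\Sigma(T)$, so Theorem~\ref{thm:sym} applies to $T$ whenever $|T| \ge 4$.

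In the regime where $S$ is \emph{far from symmetric}, that is, $|U| \ge \eps|S|$ for some fixed small $\eps > 0$, Theorem~\ref{thm:nonsym} already yields $|\Sigma(S)| \ge (\tfrac14+\eps^2)|S|^2$ once $|S|$ exceeds $n_0(\eps)$, which comfortably beats $\tfrac14|S|(|S|+2)+1$. Theorem~\ref{thm:main} applied to $U$ provides $|\Sigma(U)| \ge \tfrac12 |U|(|U|-1) + 3$ (in case $(i)$), which, combined with a Kneser-type inequality against $\Sigma(T)$, should absorb the finitely many remaining $|S| < n_0(\eps)$; case $(ii)$ of Theorem~\ref{thm:main} produces a subset $S' \subseteq U$ with $|\Sigma(S')| > |\langle S' \rangle|/2$, from which a direct argument (splitting on whether $\langle S'\rangle = G$ or not, and using that $\Sigma(S)$ is aperiodic) gives the desired bound.

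In the complementary regime $|U| \le \eps|S|$, the set $T$ is essentially all of $S$. Apply Theorem~\ref{thm:sym} to $T$ (using the sharper $|G|$-odd version when available) to obtain a bound of order $\tfrac14 |T|^2$. The task is then to show that extending by $U$ contributes roughly $\tfrac12 |T||U| + \tfrac12 |U|^2$ further sums. The bare Kneser inequality $|\Sigma(T)+\Sigma(U)| \ge |\Sigma(T)| + |\Sigma(U)| - 1$ is far too weak here, so one needs a translate-counting argument: for each $u \in U$, and more generally for each non-zero partial sum of $U$, exhibit on the order of $|T|$ elements of $\Sigma(T) + u$ that lie outside $\Sigma(T)$, exploiting that $u \notin -T$ together with aperiodicity of $\Sigma(T)$. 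Iterating element by element through $U$ should accumulate the required quadratic surplus.

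The hardest part will be the case where $|G|$ is even and $S$ is (essentially) symmetric: Theorem~\ref{thm:sym} then gives only $\tfrac14|S|(|S|-2) + 5$, which falls short of the conjectural bound by exactly $|S|-4$. So even the purely symmetric case of the conjecture is open for even-order groups, and bridging this gap seems to require genuinely new input --- perhaps a strengthening of Theorem~\ref{thm:sym} exploiting the unique element of order two in $G$, or a classification of the sets achieving near-equality in Theorem~\ref{thm:sym} showing that those extremal configurations always have periodic $\Sigma(S)$ and are therefore excluded by our hypothesis.
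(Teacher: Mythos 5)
The statement you are attempting is stated in the paper as an open conjecture, not a theorem: the authors prove the bound only under the additional hypotheses that $S$ is symmetric and $|G|$ is odd (Theorem~\ref{thm:sym}), and explicitly conjecture that these hypotheses can be dropped. Your proposal does not close that gap, and your final paragraph concedes the decisive point: when $S$ is symmetric (so $U=\emptyset$ and your whole scheme collapses to its base case) and $|G|$ is even, the best available bound is $\frac{1}{4}|S|(|S|-2)+5$, which falls short of the target by $|S|-4$. Since the symmetric even-order case is a special case of the conjecture, an argument that fails there is not a proof, whatever happens in the other regimes. Moreover the deficit does not vanish when $U$ is small but non-empty: even granting your (unproved) claim that appending $U$ contributes an extra $\frac{1}{2}|T||U|+\frac{1}{2}|U|^2$ sums, that cross term exactly matches the cross term in the expansion of $\frac{1}{4}(|T|+|U|)(|T|+|U|+2)$, so a shortfall of order $|T|$ inherited from the base case survives intact whenever the even-order bound (or the $\xi'=0$ case of the odd-order bound) is all that is available for $T$.

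The other two regimes also contain steps that would fail as written. In the far-from-symmetric regime, Theorem~\ref{thm:nonsym} applies only for $|S|\ge n_0(\eps)$ with $n_0$ unspecified, and for the remaining $|S|<n_0(\eps)$ the fallback $|\Sigma(S)|\ge|\Sigma(T)|+|\Sigma(U)|-1$ is quantitatively hopeless whenever $|T|\gg|U|$: the sum of the two lower bounds is roughly $\frac{1}{4}|T|^2+\frac{1}{2}|U|^2$ and misses the cross term $\frac{1}{2}|T||U|$ entirely, and ``finitely many remaining cases'' is not an argument when $G$ ranges over an infinite family of groups. In the near-symmetric regime, the translate-counting step needs $\lambda_{\Sigma(T)}(u)$ to be of order $|T|$ for every $u\in U$ and every partial sum of $U$; aperiodicity of $\Sigma(T)$ only guarantees $\lambda_{\Sigma(T)}(g)\ge 1$ for $g\ne 0$, and the paper's machinery (Lemmas~\ref{lem:VG} and~\ref{lem:VI}) produces a large value of $\lambda_B(x)$ only for \emph{some} $x$ in a generating set whose symmetrisation has a Vosper-representation, never for a prescribed element. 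What you have is a programme with several genuinely open components rather than a proof; the core difficulty --- removing the symmetry and odd-order hypotheses from Theorem~\ref{thm:sym} --- is exactly what the authors leave open.
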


We remark also that similar results may be proved when $\Sigma(S)$ has a non-trivial period (stabiliser).  For a subset $X \subseteq G$ we let 
\[
K=K(X)=\{g\in G: X+g=X\}\, ,
\]
and refer to $K$ as the \emph{period} of $X$. In addition, $X$ will be called $H$\em-periodic \em whenever $H$ is a subgroup of $G$ contained in $K$. 

\begin{theorem}\label{thm:symperiod} Let $G$ be a finite abelian group, $S\subseteq G$ a symmetric subset and $K$ the period of $\Sigma(S)$.  Then 
\[
|\Sigma(S)|\ge \frac{|S\setminus K|(|S\setminus K|-2)}{4}+|K| \, .
\]
\end{theorem}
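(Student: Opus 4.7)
The plan is to reduce to the aperiodic case (Theorem~\ref{thm:sym}) via the quotient map $\pi\colon G\to G/K$. The key identity is
\[
\Sigma(S)\ =\ K+\Sigma(S\setminus K),
\]
which one obtains from $K\subseteq\Sigma(S)$ (since $0\in\Sigma(S)$ and $K$ stabilises $\Sigma(S)$) together with $\Sigma(S\cap K)\subseteq K$. Thus $|\Sigma(S)|=|K|\cdot|\pi(\Sigma(S))|$, and $\pi(\Sigma(S))$ is aperiodic in $G/K$ by the definition of $K$.

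Write $m=|S\setminus K|$ and let $M$ be the symmetric multiset on $G/K\setminus\{0\}$ obtained by projecting $S\setminus K$ with multiplicities, so that $|M|=m$ and every element of $M$ has multiplicity at most $|K|$. Let $T$ denote its underlying set. A short check shows that $\pi(\Sigma(S))$ coincides with the multiset subset-sum set $\Sigma_{\mathrm{multi}}(M)$. In the special case where $\pi$ is injective on $S\setminus K$, we have $M=T$ as a set of size $m$, and Theorem~\ref{thm:sym} applied to $T\subseteq G/K$ yields
\[
|\Sigma(S)|\ =\ |K|\cdot|\Sigma(T)|\ \ge\ |K|\Big(\tfrac{m(m-2)}{4}+5\Big)\ \ge\ \tfrac{m(m-2)}{4}+|K|,
\]
the desired bound (while small cases $m\le 3$ are handled by the trivial estimate $|\Sigma(S)|\ge|K|$, using that $\Sigma(S)$ is $K$-periodic and strictly contains $K$ as soon as $S\setminus K$ is nonempty).

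The main obstacle is the non-injective case: when several elements of $S$ project to the same non-zero coset of $K$, the underlying set $T$ can be as small as $m/|K|$, and applying Theorem~\ref{thm:sym} to $T$ alone is too weak by a factor of about $|K|$ in the leading term. To close this gap I would prove a multiset analogue of Theorem~\ref{thm:sym}: if $M$ is a symmetric multiset on a finite abelian group $G'$ supported off $0$, with every element of multiplicity at most $r$ and with $\Sigma_{\mathrm{multi}}(M)$ aperiodic in $G'$, then $|\Sigma_{\mathrm{multi}}(M)|\ge |M|(|M|-2)/(4r)+1$. Applied in $G/K$ with $r=|K|$, and multiplied through by $|K|$, this yields the theorem. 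My intended proof of this multiset bound is by induction on $|M|$: the base case $r=1$ is Theorem~\ref{thm:sym}, and when some element of $M$ has multiplicity at least two, one removes a symmetric pair of maximum-multiplicity elements and controls the shrinkage of $\Sigma_{\mathrm{multi}}(M)$ by Kneser's theorem, using the aperiodicity hypothesis to rule out the periodic collapses. Making this Kneser-based induction go through with the right constant $1/(4r)$ is the technical heart of the argument.
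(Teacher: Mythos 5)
Your reduction to the quotient is sound and matches the paper's first step: $\Sigma(S)=K+\Sigma(S\setminus K)$, hence $|\Sigma(S)|=|K|\cdot|\pi(\Sigma(S))|$ with $\pi(\Sigma(S))$ aperiodic in $G/K$, and the statement you still need is precisely a multiset analogue of Theorem~\ref{thm:sym} with constant $1/(4r)$. The gap is in your proposed proof of that analogue. Removing a symmetric pair $\{a,-a\}$ from $M$ gives the exact identity $\Sigma_{\mathrm{multi}}(M)=\Sigma_{\mathrm{multi}}(M')+\{0,a,-a\}$, and Kneser's Theorem together with aperiodicity yields a gain of at most $|\{0,a,-a\}|-1=2$ per removed pair. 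The total gain from stripping $M$ down to its underlying set $T$ is therefore linear, roughly $|M|-|T|$, whereas the target $|M|(|M|-2)/(4r)$ exceeds $|T|(|T|-2)/4+(|M|-|T|)$ by an unbounded amount once $|M|$ is large compared with $r$. Concretely, take $M$ to be $r$ copies of each of $\pm1,\dots,\pm s$ in $\mathbb{Z}_n$ with $n$ large: the target is about $rs^2$, while your induction can only produce about $s^2+2rs$, short by a factor of roughly $r$ in the leading term. No choice of which pair to remove repairs this; quadratic growth cannot be extracted from Kneser applied to a three-element set.

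The missing idea --- and it is how the paper argues --- is to peel off a whole \emph{set} at a time rather than a pair. Writing $T_i=\{Q\in G/K:\ |(S\setminus K)\cap Q|\ge i\}$ for $i=1,\dots,l$ (the ``layers'' of your multiset $M$, each a genuine symmetric subset of $G/K\setminus\{0\}$), one has $\pi(\Sigma(S\setminus K))=\Sigma(T_1)+\dots+\Sigma(T_l)$. Kneser plus aperiodicity gives $|\Sigma(T_1)+\dots+\Sigma(T_l)|\ge\sum_i|\Sigma(T_i)|-(l-1)$, Theorem~\ref{thm:sym} (plus a direct check for small layers) gives $|\Sigma(T_i)|\ge |T_i|(|T_i|-2)/4+2$ for each $i$, and convexity of $t\mapsto t(t-2)$ converts $\sum_i|T_i|(|T_i|-2)/4$ into a quantity of order $|M|(|M|-2)/(4l)$ with $l\le|K|$. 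Here the quadratic gain per layer comes from Theorem~\ref{thm:sym} applied to the layer, not from Kneser; Kneser is used only once, to glue the $l$ layers together at a cost of $l-1$. If you replace your pair-removal induction by this layer decomposition, your argument becomes essentially the paper's proof.
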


The outline of the article is as follows.  In Section~\ref{sec:deduce}, we show how Theorems~\ref{thm:sym} and ~\ref{thm:symperiod} may be deduced from Theorem~\ref{thm:main}.  The only tools we shall require in Section~\ref{sec:deduce} are Kneser's Addition Theorem and the so-called prehistoric lemma.  In Section~\ref{sec:tools}, we introduce the main tools and techniques that we shall require for the proof of Theorem~\ref{thm:main}.  Curiously a technique introduced by Erd\H os and Heilbronn~\cite{EH64} and sharpened by Olson~\cite{Olson68} remains at the heart of our proof.  The proof of Theorem~\ref{thm:main} appears in Section~\ref{sec:proof}.

We remark that an extension of Olson's result in $\mathbb{Z}_p$ has recently been obtained by one of the authors~\cite{Balandraud12}.

\section{Subset sums of symmetric sets: Theorems~\ref{thm:sym} and~\ref{thm:symperiod}}\label{sec:deduce}

In this section, we deduce Theorems~\ref{thm:sym} and~\ref{thm:symperiod} from Theorem~\ref{thm:main}.  We shall require Kneser's Addition Theorem, the prehistoric lemma and a simple observation concerning aperiodic sets.  As usual for subsets $X,Y\subseteq G$ we let $X+Y:=\{x+y:\, x\in X,y\in Y\}$.

\begin{theorem}[Kneser's Addition Theorem, \cite{Kn1,Kn2,NAT,TaoVu}] 
\label{thm:Kneser}
Let $X,Y$ be two subsets of a finite abelian group $G$, and let $H$ be the period of $X+Y$. Then 
$$|X+Y| \ge  |X + H| + |Y + H| - |H|.$$
\end{theorem}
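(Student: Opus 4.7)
The plan is to prove Kneser's theorem by induction on $|X|+|Y|$, with Dyson's $e$-transform as the main device. Throughout I would assume without loss of generality that $|Y|\le|X|$.

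The base case $|Y|=1$ is immediate: the period $H$ of $X+y$ equals the period of $X$, so $X+H=X$ and $Y+H=H$, and the inequality collapses to $|X|\ge|X|$. For the inductive step, the central tool is the Dyson transform: for each $e\in G$ I define
\[
X_e \;=\; X\cup(Y+e), \qquad Y_e \;=\; Y\cap(X-e).
\]
A direct check shows $X_e+Y_e\subseteq X+Y$ and $|X_e|+|Y_e|=|X|+|Y|$ (using $|X\cap(Y+e)|=|Y_e|$ together with inclusion--exclusion). If I can produce some $e\in X-Y$ with $\emptyset\ne Y_e\subsetneq Y$, then the inductive hypothesis applied to the pair $(X_e,Y_e)$ yields a bound on $|X_e+Y_e|$; since the period of $X_e+Y_e$ is a subgroup of the period $H$ of $X+Y$, a short calculation converts this into the required inequality for $|X+Y|$.

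The hard part is the degenerate case where every $e\in X-Y$ satisfies $Y_e\in\{\emptyset,Y\}$, meaning that whenever $Y$ meets $X-e$ it lies entirely inside $X-e$. Unpacking this forces $Y-Y$ to be contained in the stabilizer of $X+Y$, so $H$ is non-trivial, and I would then quotient to $G/H$, where $X+Y$ becomes $H$-free and the inequality reduces to the aperiodic statement $|\overline{X+Y}|\ge|\bar X|+|\bar Y|-1$; this last statement can be handled by a fresh instance of the same induction carried out in the smaller group $G/H$. The subtle interplay between the Dyson transform and the emerging stabilizer $H$ is what gives Kneser's theorem its characteristic depth, and is where the bulk of the care must be spent to ensure the induction closes.
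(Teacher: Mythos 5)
The paper does not prove this statement: Kneser's Addition Theorem is quoted as a known result with references to \cite{Kn1,Kn2,NAT,TaoVu}, so there is no in-paper proof to compare against. Judged on its own terms, your sketch follows the classical Dyson-transform strategy, but it has two genuine gaps, one of which is exactly the famous hard point of Kneser's theorem.

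First, a structural problem: you propose induction on $|X|+|Y|$, but the Dyson transform preserves this quantity ($|X_e|+|Y_e|=|X|+|Y|$, as you yourself compute), so the inductive hypothesis cannot be invoked for $(X_e,Y_e)$. The standard fix is to induct on $|Y|$ (the smaller set), which does strictly decrease when $\emptyset\neq Y_e\subsetneq Y$; this is repairable. Second, and fatally as written: the assertion that ``the period of $X_e+Y_e$ is a subgroup of the period $H$ of $X+Y$'' is false. Stabilizers are not monotone under inclusion --- a subset of an aperiodic set can be highly periodic (e.g.\ $\{0,2\}\subseteq\{0,1,2\}$ in $\mathbb{Z}/4\mathbb{Z}$) --- so $X_e+Y_e\subseteq X+Y$ gives no containment between their periods. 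What the induction actually returns is $|X+Y|\ge|X_e+Y_e|\ge|X|+|Y|-|H_e|$ with $H_e$ the stabilizer of $X_e+Y_e$, and when $H_e$ is nontrivial and unrelated to $H$ this is not the desired bound. Converting it into one is precisely the long ``coset-filling'' argument that occupies the bulk of every published proof (Kneser, Nathanson Thm.~4.3, Tao--Vu Thm.~5.5); calling it ``a short calculation'' is where your proof breaks. A minor further remark: in your degenerate case one gets $X+(Y-Y)\subseteq X$, hence $Y$ lies in a single coset of $\langle Y-Y\rangle\subseteq H$, which makes $|X+Y|=|X|$ and settles that case directly --- no fresh induction in $G/H$ is needed there; the real difficulty sits in the non-degenerate branch you treated as routine.
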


We include a second statement in the prehistoric lemma which is an immediate consequence of the first and will be useful in many of our applications of the lemma.

\begin{lemma}[Prehistoric Lemma]   
\label{lem:prehistoric}
If $X, Y$ are two subsets of a finite abelian group $G$ and $|X| + |Y| > |G|$ then $X + Y = G$.  Furthermore, if $H\ssq G$ is a subgroup, $X\ssq Q, Y\ssq R$ are subsets of $H$-cosets $Q$ and $R$ and $|X|+|Y|>|H|$ then $X+Y=Q+R$.
\end{lemma}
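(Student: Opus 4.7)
The plan is to use the elementary pigeonhole observation that two subsets of a set $Z$ whose sizes sum to more than $|Z|$ must intersect, and then translate this into a statement about sums.

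For the first statement, I would fix an arbitrary $g \in G$ and show $g \in X+Y$. Consider the translate $g-Y := \{g-y : y \in Y\}$, which has cardinality $|Y|$. Since
\[
|X| + |g-Y| \;=\; |X| + |Y| \;>\; |G|,
\]
the sets $X$ and $g-Y$ cannot be disjoint in $G$, so there exist $x \in X$ and $y \in Y$ with $x = g-y$, giving $g = x+y \in X+Y$.

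For the second statement, the inclusion $X+Y \subseteq Q+R$ is automatic from $X \subseteq Q$, $Y \subseteq R$, so I would concentrate on the reverse inclusion. Observe that $Q+R$ is a single $H$-coset, hence has exactly $|H|$ elements. Fix $s \in Q+R$ and write $s = q_0 + r_0 + h$ for some $q_0 \in Q$, $r_0 \in R$, $h \in H$; a quick computation gives $s - R = q_0 + h + H = Q$, so the translate $s-Y$ lies inside the coset $Q$. Now $X$ and $s-Y$ are both subsets of $Q$, and
\[
|X| + |s-Y| \;=\; |X| + |Y| \;>\; |H| \;=\; |Q|,
\]
so they intersect in $Q$, yielding $x = s-y$ with $x\in X$, $y\in Y$, and hence $s = x+y \in X+Y$.

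Since the lemma is very elementary, there is no real obstacle; the only point that requires a line of verification is the coset identity $s - R = Q$, which reduces the second part to the first part applied inside the single coset $Q$ (equivalently, one could pass to the quotient $G/H$ but the direct argument is already immediate).
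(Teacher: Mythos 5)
Your proof is correct: the translate-and-pigeonhole argument for the first part and its coset version (via the identity $s-R=Q$) for the second are exactly the standard arguments, and the paper itself states this lemma without proof, treating the second part as an immediate consequence of the first in just the way you describe. Nothing is missing.
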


\begin{obs} \label{obs:aperiodic}
If $\Sigma(S)$ is aperiodic and $T\subseteq S$, then $\Sigma(T)$ is aperiodic.  If $\Sigma(S)$ has period $K$ and $T\subseteq S$, then the set $\{Q\in G/K: \Sigma(T)\cap Q\neq \emptyset\}$ is aperiodic in $G/K$.
\end{obs}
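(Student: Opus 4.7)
The plan is to base everything on the elementary identity
\[
\Sigma(S)\, =\, \Sigma(T)+\Sigma(S\setminus T)\, ,
\]
which holds because every $X\subseteq S$ decomposes uniquely as $X=(X\cap T)\sqcup (X\cap (S\setminus T))$, and the sum of $X$ is the sum of $X\cap T$ plus the sum of $X\cap (S\setminus T)$. Once this identity is in place, both parts of the observation are essentially one-line deductions.

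For the first statement I would argue by contrapositive: suppose $h\in G$ satisfies $\Sigma(T)+h=\Sigma(T)$. Adding $\Sigma(S\setminus T)$ to both sides of this equality and using the identity above gives
\[
\Sigma(S)+h\, =\, (\Sigma(T)+h)+\Sigma(S\setminus T)\, =\, \Sigma(T)+\Sigma(S\setminus T)\, =\, \Sigma(S)\, ,
\]
so $h$ is also a period of $\Sigma(S)$. Hence if $\Sigma(S)$ is aperiodic, every period of $\Sigma(T)$ must be $0$.

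For the second statement I would pass to the quotient. Let $\pi:G\to G/K$ be the canonical projection, and write $A=\pi(\Sigma(T))$, noting that the set in question is exactly $A$. Applying $\pi$ to the identity gives $\pi(\Sigma(S))=A+\pi(\Sigma(S\setminus T))$. Now suppose $\bar h\in G/K$ satisfies $A+\bar h=A$, and choose any lift $h\in G$ with $\pi(h)=\bar h$. Then
\[
\pi(\Sigma(S)+h)\, =\, (A+\bar h)+\pi(\Sigma(S\setminus T))\, =\, A+\pi(\Sigma(S\setminus T))\, =\, \pi(\Sigma(S))\, .
\]
Since $\Sigma(S)$ is $K$-periodic by assumption, it equals $\pi^{-1}(\pi(\Sigma(S)))$, and similarly $\Sigma(S)+h$ equals $\pi^{-1}(\pi(\Sigma(S)+h))$; therefore $\Sigma(S)+h=\Sigma(S)$, which forces $h\in K$ and so $\bar h=0$.

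There is no real obstacle here: the whole content is the product-of-subset-sums identity and the standard fact that a $K$-periodic set is recovered from its image in $G/K$. If I had to flag one subtlety it would be keeping straight the distinction, in the second part, between $\Sigma(S)$ being a union of $K$-cosets (a property of the \emph{set}) and the purely group-theoretic lifting of $\bar h$ to $h$, which is what lets us conclude $\bar h=0$ rather than merely $\bar h\in \pi(K)=\{0\}$.
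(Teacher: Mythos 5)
Your proof is correct: the identity $\Sigma(S)=\Sigma(T)+\Sigma(S\setminus T)$ and the passage to $G/K$ are exactly the (routine) argument the paper has in mind, which is why it states this as an Observation with no written proof. Your closing remark on the distinction between $K$-periodicity of the set and the lifting of $\bar h$ is the right subtlety to flag, and your handling of it is sound since $K$ is defined as the full stabiliser of $\Sigma(S)$.
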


Let us now deduce Theorem~\ref{thm:sym} from Theorem~\ref{thm:main}.

\begin{proof}[Proof of Theorem~\ref{thm:sym}]
We prove the first bound, the second bound follows with an identical proof except using property ($i'$) rather than ($i$) in the application of Theorem~\ref{thm:main}.
Let $G$ be a finite abelian group, and $S\subseteq G\setminus \{0\}$ a symmetric subset with $|S|\ge 4$ for which $\Sigma(S)$ is aperiodic.  If $S$ contains an element $x$ of order two then $\Sigma(\{x\})=\{0,x\}$ is not aperiodic, a contradiction, by the above observation.  Thus we may assume that $S$ contains no element of order two.  It follows that $S$ contains a subset $S_+$ of cardinality $|S_+|=|S|/2$ such that $S=S_+\cup (-S_+) $ and $S_+\cap (-S_+)=\emptyset$.

By applying Theorem~\ref{thm:main} to $S_+$ we obtain that either $|\Sigma(S_+)|\ge 3+|S_+|(|S_+|-1)/2$ or $S_+$ contains a non-empty subset $S'$ such that $|\Sigma(S')|>|\subgp{S'}|/2$.  In the first case we note that, by symmetry, the same bound also applies to $|\Sigma(-S_+)|$, and so by an application of Kneser's Addition Theorem (and using the fact that $\Sigma(S)=\Sigma(S_+)+\Sigma(-S_+)$ is aperiodic) we have that
\[
|\Sigma(S)|\ge |\Sigma(S_+)|+|\Sigma(-S_+)|-1\ge 2\left(\frac{(|S|/2)(|S|/2-1)}{2}+3\right )-1=\frac{|S|(|S|-2)}{4}+5\, ,
\]
as required.  In the second case we note that, by symmetry, $|\Sigma(-S')|>|\subgp{S'}|/2$, and so $\Sigma(S'),\Sigma(-S')\subseteq \subgp{S'}$ are such that $|\Sigma(S')|+|\Sigma(-S')|>|\subgp{S'}|$ and so
\[
\Sigma(S'\cup (-S'))=\Sigma(S')+\Sigma(-S')=\subgp{S'}
\]
by the prehistoric lemma.  However, this implies that $\Sigma(S'\cup (-S'))$ is not aperiodic, a contradiction, by the above observation, and the proof is complete.\end{proof}

We now prove Theorem~\ref{thm:symperiod}.

\begin{proof}[Proof of Theorem~\ref{thm:symperiod}]
Since $0 \in \Sigma(S)$, we readily have $\Sigma(S) \supseteq K$, so that $|\Sigma(S)| \ge |K|$. 
This yields the desired result if $S \setminus K = \emptyset$. 
Thus, we can assume that $S \setminus K \neq \emptyset$. 
Now, note that it suffices to prove the inequality for $T:=S \setminus K$.  
We associate to $T$ a sequence of subsets of $G/K$.  Let $k:=|K|$. 
We define the sets $T_1,\dots , T_k\subseteq G/K$ by
\[
T_i :=\{Q\in G/K: |T\cap Q|\ge i\}\qquad i=1,\dots ,k\, ,
\]
and write $l$ for the maximal $i$ for which $T_i$ is non-empty. Note that each of the sets $T_i:i=1,\dots ,l$ is symmetric. The key observation is that an element of $G$ belongs to $\Sigma(T)$ if and only if the coset of $K$ to which it belongs is an element of
\[
\Sigma(T_1)+\dots +\Sigma(T_l)\, .
\]
So that 
\[
|\Sigma(T)|=k|\Sigma(T_1)+\dots +\Sigma(T_l)|\ge k\left(|\Sigma(T_1)|+\dots +|\Sigma(T_l)|-(l-1)\right) \, ,
\]
where the inequality follows from Kneser's Addition Theorem together with the observation that $\Sigma(T_1)+\dots +\Sigma(T_l)$ is aperiodic in $G/K$ (this follows from the definition of $K$, as an element of a $K$-coset that leaves $\Sigma(T_1)+\dots +\Sigma(T_l)$ invariant under addition would also leave $\Sigma(T)$ invariant under addition). Thus, to complete the proof of the theorem, it suffices to prove that 
 \[
 |\Sigma(T_1)|+\dots +|\Sigma(T_l)|\ge 2l + \frac{|T|(|T|-2)}{4l}\, .
 \]
However, it follows immediately from the bound
\[
|\Sigma(T_i)|\ge\frac{|T_i|(|T_i|-2)}{4} +2\, ,
\]
which is a consequence of Theorem~\ref{thm:sym} (which may be applied since $T_i$ is symmetric and $|\Sigma(T_i)|$ is aperiodic in $G/K$, see Observation~\ref{obs:aperiodic}), and the convexity of the function $f(t)=t(t-2)$, that
\[
  |\Sigma(T_1)|+\dots +|\Sigma(T_l)|\ge 2l+\sum_{i=1}^{l}\frac{|T_i|(|T_i|-2)}{4}\ge 2l+\frac{1}{l}\frac{|T|(|T|-2)}{4}\, ,
  \]
  which completes the proof.
  \end{proof}

\section{Some tools and techniques}\label{sec:tools}

In this section, we present the tools and techniques on which we base our proof of Theorem~\ref{thm:main}.  Our approach is very similar in spirit to the approach of Olson~\cite{Olson68}.  His method, a refinement of that of Erd\H os and Heilbronn, is inductive.  However, rather than considering only a single base case (such as $|S|=1$), he proves the required bound directly for all arithmetic progressions, and these become the base cases of the inductive proof.  For the inductive step he may then assume that $S$ is not an arithmetic progression in which case (with some work) one may find an element $x\in S$ such that $|\Sigma(S)|-|\Sigma(S\setminus \{x\})|$ is large, and the proof is completed by applying the induction hypothesis to $S\setminus \{x\}$.

\medskip
In generalising Olson's approach we replace his dichotomy (whether or not $S$ is an arithmetic progression) with the dichotomy of whether or not the set $\hat{S}=S\cup \{0\}\cup (-S)$ is an arithmetic progression relative to a certain subgroup $H$ of $G$, where $H$, a subgroup chosen as a function of $\hat{S}$, is given by applying the following theorem of Hamidoune and Plagne~\cite[Theorem 2.1]{HP04} to $\hat{S}$. This result from critical pair theory, whose proof relies on the so-called \em isoperimetric method\em, may be seen as a generalisation of Vosper's Theorem to the general case of finite abelian groups. 
Before stating the result, we recall the following terminology. 
A subset $X$ of a finite abelian group $G$ is a \em Vosper subset \em in $G$ if for any $Y \subseteq G$, with $|Y| \ge 2$, the inequality
$$|X + Y| \ge \min(|G| - 1, |X| + |Y|)$$
holds. 
Notice that a Vosper subset with cardinality one cannot exist in a group with cardinality four or more. 
In what follows, we denote by $\phi$ the canonical homomorphism from $G$ to $G / H$. 

\begin{theorem}[Hamidoune-Plagne]
\label{thm:HP04}
Let $A$ be a generating subset of a finite abelian group $G$ such that $0 \in A$. 
Suppose also 
$$|A| \le \frac{|G|}{2}.$$
Then, there exists a subgroup $H$ of $G$ with
$$|A + H| < \min(|G|, |H| + |A|)$$
such that $\phi(A)$ is either an arithmetic progression or a Vosper subset in $G / H$.
\end{theorem}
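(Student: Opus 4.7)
The plan is to combine Kneser's Addition Theorem with the isoperimetric method of Hamidoune, arguing by induction on $|G|$. I would first dispose of the easy case: if $A$ is itself an arithmetic progression or a Vosper subset of $G$, take $H = \{0\}$. Then $|A + H| = |A| \le |G|/2 < \min(|G|, 1 + |A|)$, and the conclusion holds. So one may assume that $A$ is neither an arithmetic progression nor a Vosper subset of $G$.

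By the failure of the Vosper property, there exists $B \subseteq G$ with $|B| \ge 2$ and $|A + B| < \min(|G|-1, |A| + |B|)$. I would choose $B$ to minimise $|A+B| - |B|$ among such witnesses (a $2$-atom of $A$, in Hamidoune's terminology), translating $B$ so that $0 \in B$, and set $H := \mathrm{stab}(A + B)$. Applying Kneser's Theorem to $A + B$ gives
\[
|A+B| \;\ge\; |A+H| + |B+H| - |H|.
\]
If $H = \{0\}$ this would force $|A+B| = |A| + |B| - 1$ with $A+B$ aperiodic and omitting at least two elements of $G$; a Kemperman-type critical pair structure theorem would then force $A$ (and $B$) to be an arithmetic progression, contradicting our assumption. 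Hence $H$ is a nontrivial proper subgroup. Since $0 \in B$ we have $A \subseteq A+B$, and $A+B$ being $H$-periodic gives $A + H \subseteq A+B$, so $|A + H| \le |A+B| < |G|$. Rearranging Kneser's inequality yields $(|A+H| - |A|) + (|B+H| - |B|) < |H|$, so $|A+H| < |A| + |H|$. Thus $H$ already satisfies the quantitative bound of the theorem.

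The remaining task, showing that $\phi(A)$ is an arithmetic progression or a Vosper subset in $G/H$, is the main obstacle. Here I would choose $H$ to be \emph{maximal} among the subgroups arising from such witnesses (i.e., among stabilisers of sumsets $A + B$ for $2$-atom-like $B$), and argue by contradiction: if $\phi(A)$ were neither an arithmetic progression nor a Vosper subset in $G/H$, then applying the same witness-extraction procedure inside the strictly smaller group $G/H$ would produce a subgroup $H'/H$ of $G/H$, whose pullback $H'$ would be a strictly larger subgroup of $G$ satisfying the same inequality $|A + H'| < \min(|G|, |A| + |H'|)$, violating the maximality of $H$. The delicate technical point is verifying that this pullback relation actually follows from the analogous bound in $G/H$: this requires a careful comparison using that $A + H$ is $H$-periodic, together with Hamidoune's atom theorem (distinct $k$-atoms share fewer than $k$ elements), which is what keeps the minimisation that defines the $2$-atom of $A$ compatible with the quotient. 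This interplay between Kneser's theorem and the atom/fragment framework is where the bulk of the work -- and the real difficulty of the proof -- lies.
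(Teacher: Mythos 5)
First, a point of reference: the paper does not prove Theorem~\ref{thm:HP04} at all --- it is quoted from Hamidoune and Plagne \cite{HP04}, so there is no internal proof to compare against. Your sketch is in the right spirit (the isoperimetric method is indeed the engine of \cite{HP04}), but as written it contains two genuine gaps.

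The first is the step where $\mathrm{stab}(A+B)=\{0\}$ and you invoke a Kemperman-type theorem to conclude that $A$ is an arithmetic progression. Kemperman's classification of aperiodic critical pairs contains, besides the AP case, quasi-periodic pairs, and these genuinely occur under your hypotheses. Concretely, take $G=\mathbb{Z}_{15}$ and $A=B=\{0,1,5,10\}$: then $A$ generates $G$, contains $0$, satisfies $|A|\le|G|/2$, is neither an AP nor a Vosper subset, $B=A$ is a legitimate minimiser of $|A+B|-|B|$ (the minimum value is $3=|A|-1$), and $A+B=\{0,1,2,5,6,10,11\}$ has $|A+B|=|A|+|B|-1=7\le|G|-2$ with trivial stabiliser --- yet $A$ is not an AP. (The subgroup the theorem wants here is $H=\{0,5,10\}$, with $\phi(A)=\{0,1\}$ an AP in $\mathbb{Z}_5$.) The defect is that minimising $|A+B|-|B|$ only produces a $2$-fragment; the subgroup in the Hamidoune--Plagne argument does not arise as the Kneser stabiliser of an arbitrary extremal sumset, but as the $2$-atom itself (the fragment of \emph{minimal cardinality} containing $0$), which Hamidoune's atom theorems show to be a subgroup outside of degenerate cases. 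That structural input cannot be replaced by Kemperman's theorem in the way you propose.

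The second gap is the maximality/pullback argument at the end. Granting a subgroup $H'/H$ of $G/H$ with $|\phi(A)+H'/H|<|H'/H|+|\phi(A)|$, multiplying through by $|H|$ yields only $|A+H'|<|H'|+|A+H|$, whereas the theorem demands $|A+H'|<|H'|+|A|$; since $|A+H|$ may exceed $|A|$ by as much as $|H|-1$, the required inequality does not pull back, and the contradiction with the maximality of $H$ evaporates. (One also needs $|\phi(A)|\le|G/H|/2$ to recurse in $G/H$, which does not follow from $|A|\le|G|/2$.) You correctly flag this as the delicate point, but it is exactly where the sketch stops being a proof; closing it requires the fragment/atom machinery of \cite{HP04} rather than Kneser's theorem alone.
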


We will also use the following theorem, proved recently by some of the authors~\cite{GGH}, concerning $k\wedge A:=\{a_1+\dots +a_k: a_i\in A \, \text{distinct}\}$.  We call a coset of an elementary $2$-subgroup of $G$ a \emph{$2$-coset}.

\begin{theorem}
\label{kwedgeA}
Let $A$ be a finite subset of an abelian group $G$, and let $1\le k\le |A|-1$.  Then 
\[
|k\wedge A|\ge |A|\, ,
\]
unless $k\in \{2,|A|-2\}$ and $A$ is $2$-coset, in which case $|k\wedge A|=|A|-1$.
\end{theorem}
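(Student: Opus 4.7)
My plan is to proceed by induction on $|A|$, after two reductions. First, complementation $X \mapsto A \setminus X$ gives $s(X) = s(A) - s(A \setminus X)$, hence $k \wedge A = s(A) - (|A|-k) \wedge A$ and $|k\wedge A| = |(|A|-k)\wedge A|$; this reduces matters to $1 \le k \le |A|/2$, with the case $k=1$ being trivial. Second, the exceptional equality $|2 \wedge A| = |A|-1$ when $A = g + H$ is a $2$-coset follows from the direct computation $a_1+a_2 = 2g + (h_1+h_2)$ together with the identity $\{h_1+h_2 : h_1 \ne h_2 \in H\} = H \setminus \{0\}$ valid in any elementary $2$-abelian group.

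For the base case $k = 2$, I would start from the decomposition $A+A = (2\wedge A) \cup 2A$, which combined with $|2A| \le |A|$ yields $|2\wedge A| \ge |A+A| - |A|$, and then apply Kneser's Addition Theorem to $A+A$: if $L$ denotes the period of $A+A$, then $|A+A| \ge 2|A+L| - |L|$. A case analysis, based on whether $A$ is $L$-periodic and on the amount of $2$-torsion in the differences $A - A$ (which governs how much $|2A|$ falls short of $|A|$), would isolate the $2$-coset configuration as the sole failure.

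For the inductive step $k \ge 3$, fix any $a \in A$ and set $A' := A \setminus \{a\}$. Use the containment
\[
k \wedge A \;\supseteq\; (k \wedge A') \,\cup\, \bigl(a + (k-1) \wedge A'\bigr).
\]
By the inductive hypothesis each summand has cardinality at least $|A'| = |A|-1$, with the $2$-coset exception on $A'$ only intervening in narrow ranges of $k$ and cardinalities that can be handled separately. If for some $a$ the inclusion $a + (k-1)\wedge A' \subseteq k \wedge A'$ fails, the union has cardinality at least $|A|$ and we are done. Otherwise $k \wedge A = k \wedge (A \setminus \{a\})$ for every $a \in A$, a strong rigidity saying that every element of $k\wedge A$ can be realised by a $k$-subset avoiding any prescribed element, which I would exploit to force $A$ into the $2$-coset configuration; but this would contradict $3 \le k \le |A|/2$, ruling the case out.

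The main obstacle is precisely this rigidity case in the inductive step: showing that uniform removability, combined with the structure of the base case, forces $A$ to be a $2$-coset. A secondary technical issue is the careful bookkeeping of the exceptional cases as they propagate through the induction, in particular when $|A'|$ happens to be a power of $2$ and the inductive hypothesis on $A'$ loses one element.
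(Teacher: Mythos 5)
First, note that the paper does not actually prove Theorem~\ref{kwedgeA}: it is imported as a black box from the companion paper \cite{GGH} (``$k$-sums in abelian groups''), so there is no internal proof to compare yours against. Judged on its own terms, your proposal gets the easy reductions right --- the complementation identity $|k\wedge A|=|(|A|-k)\wedge A|$, the triviality of $k=1$, and the verification that a $2$-coset attains $|2\wedge A|=|A|-1$ are all correct --- but the two steps that carry the actual content of the theorem are not closed, and one of them fails quantitatively as written.

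Concretely, in the base case $k=2$ your chain of inequalities is $|2\wedge A|\ge |A+A|-|\{2a:a\in A\}|\ge (2|A+L|-|L|)-|A|$, and when the period $L$ of $A+A$ is trivial this yields only $|2\wedge A|\ge 2|A|-1-|A|=|A|-1$: exactly one short of the claimed bound, and this shortfall occurs in the generic aperiodic case, not merely in the exceptional one. To recover the missing $+1$ you must either control the overlap between the set of doubles and $2\wedge A$ (which your subtraction discards entirely), or invoke the structure of near-equality in Kneser's theorem (Kemperman/Vosper-type information); neither appears in the sketch, and ``a case analysis would isolate the $2$-coset configuration'' is precisely the statement of the theorem for $k=2$, restated rather than proved. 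The inductive step has the same character: you candidly identify the rigidity case ($k\wedge A=k\wedge(A\setminus\{a\})$ for every $a\in A$) as the main obstacle and offer no argument for why it forces $A$ into the $2$-coset configuration, and the propagation of the exceptional case of the inductive hypothesis (when some $A\setminus\{a\}$ is a $2$-coset, or when $k-1=2$) is likewise deferred. So while the architecture --- induction on $|A|$ via $k\wedge A\supseteq(k\wedge A')\cup\bigl(a+(k-1)\wedge A'\bigr)$ --- is plausible, the proposal as it stands is a plan whose hard parts remain open rather than a proof.
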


In particular, if $H$ is a subgroup of $G$ and $A$ a subset of an $H$-coset such that $|H|/2<|A|\le |H|$ then
\begin{equation}\label{eq:kwedgeA}
|k\wedge A|\ge\min(|H|-1,|A|)\, .
\end{equation}

We complete the section by recalling some key results related to Olson's method.

\subsection{Olson's method}

Let $B \subseteq G$ and  $x \in G$. We write
$$\lambda_B(x) = |(B+x) \setminus {B}|.$$
An interesting feature of this number is that if $S \subseteq G$ and $B=\Sigma(S)$, then for all $x \in S$,
\begin{equation}
|\Sigma(S)| \ge |\Sigma({S \setminus \{x\}})|+ \lambda_B(x).
\label{eqolson}
\end{equation}
Some immediate properties of $\lambda_B$ are given in the following lemma.
\begin{lemma}[Olson, \cite{Olson68,Olson75}] 
Let $B$ and $C$ be non-empty subsets of a finite abelian group $G$ such that $0 \notin C$. 
Then, for all $x,y \in G$, we have
\label{Proprietes de lambda}
\begin{eqnarray}
\lambda_B(x) & = & \lambda_{G \setminus B}(x). \label{eq:complement}\\
\lambda_B(x) & = & \lambda_B(-x). \label{eq:-x}\\
\lambda_B(x+y) & \le & \lambda_B(x) + \lambda_B(y). \label{eq:x+y}\\
\sum _{x \in C} \lambda_B(x) & \ge & |B|(|C|-|B|+1). \label{eq:clique}
\end{eqnarray}
\end{lemma}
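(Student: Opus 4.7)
The plan is to prove the four items in turn, using throughout the elementary reformulation
\[
\lambda_B(x)\, =\, |B+x|-|(B+x)\cap B|\, =\, |B|-|(B+x)\cap B|\, .
\]

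For \eqref{eq:complement}, I would observe that $(B+x)\setminus B=(B+x)\cap(G\setminus B)$ has the same cardinality as $((G\setminus B)+x)\cap B=((G\setminus B)+x)\setminus(G\setminus B)$, both being equal to $|B|-|(B+x)\cap B|$ (after shifting by $-x$ in one of the two intersections). For \eqref{eq:-x}, the reformulation reduces matters to the identity $|(B+x)\cap B|=|(B-x)\cap B|$, which follows upon translating by $x$. For \eqref{eq:x+y}, I would use the telescoping inclusion
\[
(B+x+y)\setminus B\,\subseteq\,\bigl((B+x+y)\setminus(B+y)\bigr)\,\cup\,\bigl((B+y)\setminus B\bigr)\, ;
\]
the first set on the right is a translate of $(B+x)\setminus B$ and so has cardinality $\lambda_B(x)$, while the second has cardinality $\lambda_B(y)$, and a union bound gives subadditivity.

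The main step is \eqref{eq:clique}. Here the approach is double counting. Starting from
\[
\sum_{x\in C}\lambda_B(x)\, =\, |B|\,|C|\,-\,\sum_{x\in C}|(B+x)\cap B|\, ,
\]
I would reinterpret $\sum_{x\in C}|(B+x)\cap B|$ as the number of pairs $(b,b')\in B\times B$ satisfying $b'-b\in C$ (since $|(B+x)\cap B|$ counts such pairs with $b'-b=x$). The role of the hypothesis $0\notin C$ is now transparent: any such pair forces $b\neq b'$, so the count is at most $|B|(|B|-1)$. Substituting yields
\[
\sum_{x\in C}\lambda_B(x)\,\ge\, |B|\,|C|-|B|(|B|-1)\,=\,|B|(|C|-|B|+1)\, ,
\]
as desired.

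The first three items are essentially exercises in set translation and a union bound, so the principal obstacle is the clique inequality \eqref{eq:clique}. All the content lies in spotting the double-counting reformulation in terms of the difference set of $B$, and then exploiting $0\notin C$ to exclude the diagonal; once that change of viewpoint is made, the stated bound is immediate.
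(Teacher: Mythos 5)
Your proof is correct in all four parts: the translation identities for \eqref{eq:complement} and \eqref{eq:-x}, the union bound via the telescoping inclusion for \eqref{eq:x+y}, and the double count of pairs $(b,b')\in B\times B$ with $b'-b\in C$ (excluding the diagonal because $0\notin C$) for \eqref{eq:clique} all check out. There is nothing in the paper to compare against --- the authors state this lemma with a citation to Olson and give no proof --- but your argument is the standard one from Olson's papers, so you have simply supplied the omitted proof.
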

We will also use the following lemma, which states that one can always swap an element $x\in S$ for $-x$ without changing the number of subset sums. 
In addition, the resulting set of subset sums is aperiodic if and only if $\Sigma(S)$ is so.      
\begin{lemma}[Olson, \cite{Olson68,Olson75}]
\label{olsonrepl}
Let $S$ be a non-empty subset of $G \setminus \{ 0\}$. 
For any $x \in S$, one has  $|\Sigma((S \setminus \{x\})\cup \{-x\})|=|\Sigma(S)|$. Furthermore, $\Sigma((S \setminus \{x\})\cup \{-x\})$ is aperiodic if and only if $\Sigma(S)$ is so.
\end{lemma}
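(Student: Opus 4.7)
The plan is to establish a single stronger identity that delivers both conclusions of the lemma at once: writing $S' := (S \setminus \{x\}) \cup \{-x\}$, I will show that $\Sigma(S')$ is merely a translate of $\Sigma(S)$, specifically $\Sigma(S') = \Sigma(S) - x$.

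To prove this, set $A := \Sigma(S \setminus \{x\})$ and partition the subset sums of $S$ according to whether the chosen subset contains $x$. This gives the decomposition $\Sigma(S) = A \cup (A + x)$. Applying the same partition to $S'$ (splitting on whether the chosen subset contains $-x$) yields $\Sigma(S') = A \cup (A - x)$. Translating the first equality by $-x$ produces
\[
\Sigma(S) - x \;=\; (A - x) \cup A \;=\; \Sigma(S'),
\]
as claimed.

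Both parts of the lemma now follow at once. Cardinality is invariant under translation, so $|\Sigma(S')| = |\Sigma(S)|$. For the second part, observe that the period $K(X) = \{g \in G : X + g = X\}$ of any subset $X \subseteq G$ coincides with the period of each of its translates (since $X + g = X$ is equivalent to $(X - x) + g = X - x$), so $\Sigma(S)$ and $\Sigma(S')$ share the same period; in particular, $\Sigma(S')$ is aperiodic if and only if $\Sigma(S)$ is. No real obstacle arises here: the entire argument collapses to the one-line translation identity above, and the only point meriting care is to verify the decomposition $\Sigma(S') = A \cup (A - x)$, which is straightforward in the setting where this lemma is applied (and trivial if $x$ has order two, when $S' = S$).
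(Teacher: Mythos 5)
Your proof is correct and is exactly the classical argument behind this lemma (which the paper does not reprove but attributes to Olson): the translation identity $\Sigma((S\setminus\{x\})\cup\{-x\})=\Sigma(S)-x$ immediately yields both the cardinality statement and the equivalence of aperiodicity, since translation preserves both cardinality and the period $K(X)$. The one point worth making explicit is that your decomposition $\Sigma(S')=A\cup(A-x)$ requires $-x\notin S\setminus\{x\}$ (otherwise $S'=S\setminus\{x\}$ and the claim can fail), which is guaranteed in every application in the paper because there $S\cap(-S)=\emptyset$.
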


The main idea in Olson's method is to find conditions which guarantee the existence of an element $x \in S$ such that $\lambda_B(x)$ is large.

\begin{lemma}[Olson \cite{Olson75}]
\label{Minoration de lambda} 
Let $G$ be a finite abelian group and let $S$ be a generating subset of $G$ such that $0 \notin S$. 
Let $B$ be a subset of $G$ such that $|B| \le |G|/2$. Then there exists $x \in S$ such that
$$\lambda_B(x) \ge \min\left(\frac{|B| + 1}{2}, \frac{|S \cup (-S)| + 2}{4}\right).$$
\end{lemma}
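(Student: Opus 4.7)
I would argue by contradiction. Suppose $\lambda_B(x) < m$ for every $x \in S$, where $m = \min\bigl((|B|+1)/2,\,(|C|+2)/4\bigr)$ and $C = S \cup (-S)$. By property \eqref{eq:-x}, the strict inequality extends to every $x \in C$, a symmetric subset of $G\setminus\{0\}$. The clique inequality \eqref{eq:clique} then yields the master inequality
\[
|B|(|C|-|B|+1) \;\le\; \sum_{x \in C} \lambda_B(x) \;<\; m|C|.
\]
The rest of the proof is a case analysis on $|C|$, designed to reduce to a single delicate regime.

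When $|C| \ge 2|B|$, so $m = (|B|+1)/2$, the master inequality simplifies to $(|B|-1)|C| < 2|B|(|B|-1)$, forcing $|C| < 2|B|$ for $|B|\ge 2$ — a direct contradiction (the edge case $|B|=1$ is handled by noting that $\lambda_B(x)=1$ for every $x\notin \mathrm{stab}(B)$, while $\mathrm{stab}(B)\ne G$ since $B\ne\emptyset, G$). When $|C| \in \{2|B|-2,\,2|B|-1\}$, so $m = (|C|+2)/4$, the master inequality rearranges to $(|C|+1-2|B|)^2 > 1$, contradicting $|C|+1-2|B|\in\{-1,0\}$. Thus the only surviving regime is $|C| \le 2|B|-3$, where clique averaging is no longer decisive.

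To handle this final regime I would combine subadditivity \eqref{eq:x+y} with the generating hypothesis on $C$. Consider the iterated sumsets $kC := \underbrace{C+\cdots+C}_k$: on $kC$ subadditivity gives $\lambda_B < km$, while iterated application of Kneser's Addition Theorem (Theorem~\ref{thm:Kneser}) forces $|kC|$ to grow until the period of $kC$ stabilises the sumset. Applying the clique inequality to $kC\setminus\{0\}$ and invoking $|B|\le|G|/2$ — which ensures $|B|(|kC|-|B|+1) \ge |B|\cdot|kC|/2$ once $|kC|$ is close to $|G|$ — gives a contradiction with $\sum_{kC\setminus\{0\}}\lambda_B < km|kC|$ for a suitably chosen $k$. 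The main obstacle is precisely this iterative estimate: one must choose $k$ so that $km$ remains below the threshold $(|B|+1)/2$ while $|kC|$ is simultaneously forced to be large by Kneser's theorem, and additionally handle the possibility that $kC$ acquires a non-trivial period $H$ by descending to $G/H$ and rerunning the argument after an appropriate renormalisation.
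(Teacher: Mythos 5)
Your reduction to $C=S\cup(-S)$ via \eqref{eq:-x}, the use of the clique inequality \eqref{eq:clique}, and the two easy regimes are all correct: the algebra $(|B|-1)|C|<2|B|(|B|-1)$ for $|C|\ge 2|B|$ and the rearrangement to $(|C|+1-2|B|)^2>1$ for $|C|<2|B|$ both check out, and they correctly isolate $|C|\le 2|B|-3$ as the only surviving regime. (For the record, the paper does not prove this lemma at all: it is quoted from Olson, obtained by applying Lemma~3.1 of \cite{Olson75} to $S\cup(-S)$ together with \eqref{eq:-x}. So everything after your first two cases is you reconstructing Olson's argument.)

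The reconstruction has a genuine quantitative gap. Writing $\alpha=\max_{x\in C}\lambda_B(x)$, $b=|B|$, $c=|C|$, your plan bounds $\sum_{x\in kC\setminus\{0\}}\lambda_B(x)$ \emph{uniformly} by $k\alpha\,|kC|$, and compares with the clique lower bound $b(t-b+1)$ where $t=|kC\setminus\{0\}|$. Even in the best case $t=kc$ (trivial period, Kneser-rate growth), the inequality you would need, namely
\[
b-\frac{b(b-1)}{kc}\;\ge\;\frac{k(c+2)}{4}\,,
\]
has its left side maximised over $k$ at the value $b-\sqrt{b(b-1)(c+2)/c}$, which is non-negative only when $c\ge 2b-2$ --- i.e.\ it fails for \emph{every} $k$ throughout the entire regime $c\le 2b-3$ where you need it. The uniform bound $k\alpha$ is off by essentially a factor of $2$: one must instead stratify $kC$ into layers of elements first reachable as a sum of $j$ terms and charge those elements only $j\alpha$ via \eqref{eq:x+y}, so that the average charge is about $k\alpha/2$. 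This is exactly the bookkeeping carried out in the paper's own Lemmas~\ref{lemme lambda general} and~\ref{lemme lambda} (disjoint sets $C_j\subseteq j\hat S$ with $\lambda_B\le j\alpha$ on $C_j$), and it is not optional. Separately, your treatment of a non-trivial period $H$ of $kC$ by ``descending to $G/H$ and rerunning the argument'' does not typecheck: $\lambda_B$ does not descend to $G/H$ unless $B$ itself is $H$-periodic, which nothing guarantees. Handling the stall in Kneser growth is the real content of Olson's Lemma~3.1 (and the reason the paper's self-contained lemmas assume a Vosper-representation to force faithful growth); as written, your proposal defers rather than resolves it.
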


\begin{proof}
This result follows, using  (\ref{eq:-x}), by applying Lemma $3.1$ of \cite{Olson75} to $S \cup (-S)$.
\end{proof}

We will also use the following lemma, which is a consequence of the main result in \cite{Hami98}.
\begin{lemma}[Hamidoune]
\label{Lemme 2S}
Let $S$ be a subset of a finite abelian group $G$ such that $S \cap (-S) = \emptyset$. 
Then
$$|\Sigma(S)| \ge 2|S|.$$
\end{lemma}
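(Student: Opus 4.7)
The plan is to prove the bound by strong induction on $n := |S|$. For the base cases $n \le 2$, I would verify the bound directly: when $n = 1$, $\Sigma(S) = \{0, s_1\}$ has size $2$ because the antisymmetry $S \cap (-S) = \emptyset$ forces $0 \notin S$; when $n = 2$ with $S = \{s_1, s_2\}$, each possible coincidence among $0, s_1, s_2, s_1+s_2$ is ruled out (e.g.\ $s_1+s_2 = 0$ would put $s_2 = -s_1 \in -S$, contradicting antisymmetry), so $|\Sigma(S)| = 4$.

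For the inductive step with $n \ge 3$, the strategy is to produce an element $x \in S$ such that $\lambda_{\Sigma(S \setminus \{x\})}(x) \ge 2$: applying the induction hypothesis to the (still antisymmetric) set $S \setminus \{x\}$ gives $|\Sigma(S \setminus \{x\})| \ge 2(n-1)$, and then inequality~\eqref{eqolson} yields
\[
|\Sigma(S)| \,\ge\, 2(n-1) + \lambda_{\Sigma(S \setminus \{x\})}(x) \,\ge\, 2n,
\]
as desired. I would argue for the existence of such an $x$ by contradiction, assuming $\lambda_{\Sigma(S \setminus \{x\})}(x) \le 1$ for every $x \in S$, and splitting into two sub-cases.

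If $\lambda_{\Sigma(S \setminus \{x\})}(x) = 0$ for every $x$, then $\Sigma(S \setminus \{x\}) = \Sigma(S)$ and $x \in \textup{stab}(\Sigma(S))$ for each $x$, so $\langle S \rangle \subseteq \textup{stab}(\Sigma(S))$; since $0 \in \Sigma(S)$, it follows that $\Sigma(S) \supseteq \langle S \rangle \supseteq S \cup (-S)$, giving the desired bound $|\Sigma(S)| \ge |S \cup (-S)| = 2n$ outright. In the remaining case some $x \in S$ has $\lambda_{\Sigma(S \setminus \{x\})}(x) = 1$, and I would apply Lemma~\ref{Minoration de lambda} inside the subgroup $\langle S \rangle$ to $B = \Sigma(S \setminus \{x\})$ (using~\eqref{eq:complement} to reduce to the case $|B| \le |\langle S \rangle|/2$ if necessary). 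Since $|S \cup (-S)| = 2n$, the lemma produces an element $y \in S \cup (-S)$ with $\lambda_B(y) \ge (n+1)/2$, and Lemma~\ref{olsonrepl} allows us to take $y \in S$; for $n \ge 3$ this gives $\lambda_B(y) \ge 2$.

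The main obstacle is reconciling the output of Lemma~\ref{Minoration de lambda} with what we actually need: the lemma delivers large $\lambda_{\Sigma(S \setminus \{x\})}(y)$, whereas closing the induction requires large $\lambda_{\Sigma(S \setminus \{y\})}(y)$. When $y = x$ the contradiction is immediate; when $y \ne x$ one must compare the two sumsets $\Sigma(S \setminus \{x\})$ and $\Sigma(S \setminus \{y\})$, which under the standing hypothesis agree up to one element on each side. Combining this structural information with the subadditivity~\eqref{eq:x+y} and the symmetry~\eqref{eq:-x} of $\lambda$, together with a careful choice of the initial element $x$ (or an averaging argument over $x \in S$), should transfer the large $\lambda$-value onto the correct sumset and produce the required contradiction, thereby completing the induction.
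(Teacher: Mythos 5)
Your overall strategy (induction on $|S|$ plus Olson's $\lambda$ method via Lemma~\ref{Minoration de lambda}) is the right one, but the argument has a genuine gap at exactly the point you flag as ``the main obstacle'', and that obstacle is not resolved by the tools you cite. You aim to produce $x$ with $\lambda_{\Sigma(S\setminus\{x\})}(x)\ge 2$, whereas Lemma~\ref{Minoration de lambda} hands you a $y$ with $\lambda_{\Sigma(S\setminus\{x\})}(y)$ large for the \emph{fixed} reference set $\Sigma(S\setminus\{x\})$. Passing from $\Sigma(S\setminus\{x\})$ to $\Sigma(S\setminus\{y\})$ is not a harmless perturbation: each element added to or removed from $B$ can decrease $\lambda_B(y)$ by $1$, and for small $|S|$ the lemma only guarantees $\lambda_{\Sigma(S\setminus\{x\})}(y)\ge 2$ (the second term in the minimum is $(|S\cup(-S)|+2)/4=(|S|+1)/2$), so after the transfer you retain no usable lower bound. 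The inequalities \eqref{eq:x+y} and \eqref{eq:-x} give \emph{upper} bounds on $\lambda$ and cannot transfer a lower bound, and the averaging over $x$ is not made concrete. As written the induction does not close (one could salvage it for $|S|\ge 5$, where the lemma yields $\lambda\ge 3$ and one can afford to lose $1$, but the cases $|S|=3,4$ would then need separate treatment).

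The fix is to use inequality \eqref{eqolson} as it is actually stated, namely with $B=\Sigma(S)$: one has $|\Sigma(S)|\ge |\Sigma(S\setminus\{x\})|+\lambda_{\Sigma(S)}(x)$ for every $x\in S$ (this is the weaker, but usable, form of the identity you wrote, since $\lambda_{\Sigma(S)}(x)\le\lambda_{\Sigma(S\setminus\{x\})}(x)$). The reference set then no longer depends on which element is removed, so the mismatch disappears entirely. This is the paper's proof: working inside $\langle S\rangle$, if $|B|\ge|\langle S\rangle|-1$ the bound is immediate from $|S|\le(|\langle S\rangle|-1)/2$; otherwise $2\le|B|\le|\langle S\rangle|-2$, and applying Lemma~\ref{Minoration de lambda} to $B$ or to its complement (using \eqref{eq:complement}) gives some $x\in S$ with $\lambda_B(x)\ge\min\bigl(\tfrac{|B|+1}{2},\tfrac{2|S|+2}{4}\bigr)\ge\tfrac32$, hence $\lambda_B(x)\ge 2$ by integrality, and the induction hypothesis applied to $S\setminus\{x\}$ finishes the step. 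Your case split on $\lambda_{\Sigma(S\setminus\{x\})}(x)\in\{0,1\}$ and the transfer argument then become unnecessary.
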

\begin{proof}
The proof follows easily by induction on $|S| \ge 1$. It trivially holds when $|S|=1$, so assume $|S| \ge 2$ and set $B = \Sigma(S)$.
If $|B| \ge |G|-1$, then since $|S| \le (|G|-1)/2$ we obtain $|B| \ge 2|S|$. Otherwise, we have $2 \le |B| \le |G|-2$. 
Now, by Lemma \ref{Minoration de lambda} applied to $B$ or $G \setminus B$, and using (\ref{eq:complement}), there exists $x \in S$ such that $\lambda_B(x) \ge 2$. 
By (\ref{eqolson}), $|B| \ge |\Sigma(S \setminus \{x\})| + 2 \ge 2|S|$.
\end{proof}

From these two results, we deduce the following useful lemma. 
\begin{lemma}
\label{Lemme 2S+1}
Let $S$ be a subset of a finite abelian group $G$ such that $S \cap (-S) = \emptyset$, $|\Sigma(S)| \le |G|/2$ and $|S| \ge 4$. 
Then
$$|\Sigma(S)| \ge 2|S| + 1.$$
\end{lemma}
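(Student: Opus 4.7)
I will argue by contradiction. Lemma \ref{Lemme 2S} already gives $|\Sigma(S)|\ge 2|S|$, so the conclusion can only fail at equality; set $B=\Sigma(S)$ and suppose $|B|=2|S|$. The plan is to find some $x\in S$ with $\lambda_B(x)\ge 3$. Once this is done, Lemma \ref{Lemme 2S} applied to $S\setminus\{x\}$ (which is non-empty since $|S|\ge 4$ and inherits the antisymmetry $(S\setminus\{x\})\cap(-(S\setminus\{x\}))=\emptyset$) yields $|\Sigma(S\setminus\{x\})|\ge 2(|S|-1)$, and (\ref{eqolson}) combined with this gives
\[
|B|\;\ge\;|\Sigma(S\setminus\{x\})|+\lambda_B(x)\;\ge\;2(|S|-1)+3\;=\;2|S|+1,
\]
which contradicts $|B|=2|S|$.

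To produce such an $x$, I apply Lemma \ref{Minoration de lambda} to $B$, working (if necessary) in the subgroup $\langle S\rangle$, where $S$ is a generating subset; the size hypothesis $|B|\le|G|/2$ of our lemma supplies the size condition of Lemma \ref{Minoration de lambda}, with the standard complement trick via (\ref{eq:complement}) available should $|B|$ exceed $|\langle S\rangle|/2$, exactly as in the proof of Lemma \ref{Lemme 2S}. Since $S\cap(-S)=\emptyset$ gives $|S\cup(-S)|=2|S|$, Lemma \ref{Minoration de lambda} yields an $x\in S$ with
\[
\lambda_B(x)\;\ge\;\min\!\left(\frac{|B|+1}{2},\;\frac{|S|+1}{2}\right).
\]
Both terms are at least $5/2$: the first because Lemma \ref{Lemme 2S} forces $|B|\ge 2|S|\ge 8$, and the second because $|S|\ge 4$. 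Integrality of $\lambda_B(x)$ then gives $\lambda_B(x)\ge 3$, as needed.

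I do not anticipate any serious obstacle. The only thing to be careful about is the applicability of Lemma \ref{Minoration de lambda} (the usual bookkeeping around the ambient group). One should also note that the hypothesis $|S|\ge 4$ appears here precisely as the threshold at which $(|S\cup(-S)|+2)/4=(|S|+1)/2$ reaches $5/2$: this is exactly what upgrades the bound $\lambda_B\ge 2$ of Lemma \ref{Lemme 2S} to the $\lambda_B\ge 3$ needed here.
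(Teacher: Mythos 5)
Your argument is correct and is essentially the paper's own proof: apply Lemma~\ref{Minoration de lambda} to $B=\Sigma(S)$ to extract an $x\in S$ with $\lambda_B(x)\ge 3$ (using $|S|\ge 4$ and integrality), then combine \eqref{eqolson} with Lemma~\ref{Lemme 2S} applied to $S\setminus\{x\}$. The contradiction framing is superfluous --- the chain $|B|\ge 2(|S|-1)+3$ already gives the bound directly --- and your extra bookkeeping about $\langle S\rangle$ matches how the lemma is actually invoked in the paper (where $S$ is always a valid generating subset, so $|B|\le|\langle S\rangle|/2$ holds).
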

\begin{proof}
Set $B = \Sigma(S)$. Since $|S| \ge 4$, we have $|B| \ge |S| +1 =5$. 
Now, by Lemma \ref{Minoration de lambda} applied to $B$, there exists $x \in S$ such that 
$$\lambda_B(x) \ge \min\left(3,5/2\right).$$
Thus, $\lambda_B(x) \ge 3$. Now, using Lemma \ref{Lemme 2S} and (\ref{eqolson}), $|B| \ge |\Sigma(S \setminus \{x\})| + 3 \ge 2(|S|-1)+3=2|S|+1$.
\end{proof}

\section{Proof of Theorem~\ref{thm:main}}\label{sec:proof}
Let $G$ be a finite abelian group and $S\subseteq G$ a subset such that $S\cap (-S)=\emptyset$ and $|S|\ge 2$. Without loss of generality we may assume that $S$ generates $G$, and we set $\hat{S}=S\cup \{0\}\cup (-S)$.  Our proof is inductive.  However, there is a certain class of sets for which an inductive proof is not appropriate.  Informally, these cases correspond to sets $S$ for which the structure of $\hat{S}$ resembles an arithmetic progression.  These cases are dealt with directly (see Proposition~\ref{prop:AP}).  With these cases as a base the theorem may then be proved by induction on $|S|$.

Given a generating subset $A$ of a finite abelian group $G$ such that $0 \in A$ and $|A|\le |G|/2$, we may apply the Hamidoune-Plagne Theorem (Theorem~\ref{thm:HP04}) to $A$ to obtain a subgroup $H$ of $G$ with the properties that $|A + H| < \min(|G|, |H| + |A|)$ and $\phi(A)$ is either an arithmetic progression or a Vosper subset in $G / H$ (where $\phi$ denotes the canonical homomorphism from $G$ to $G / H$).  In the case that $\phi(A)$ is an arithmetic progression we say that $A$ has an \emph{AP-representation}.  In the case that $\phi(A)$ is a Vosper set we say that $A$ has a \emph{Vosper-representation}.

We shall deduce Theorem~\ref{thm:main} from the following proposition and lemmas.

\begin{proposition}\label{prop:AP} Let $G$ be a finite abelian group, and let $S\subseteq G$ be a generating subset such that $S\cap (-S)=\emptyset$ and $|S|\ge 4$.  If $\hat{S}$ has an AP-representation, then one of the following holds.
\begin{itemize}
\item[($i$)] 
\[
|\Sigma(S)|\ge \frac{|S|(|S|+1)}{2}+1 \, .
\]
\item[$(ii)$] There is a non-empty subset $S'\subseteq S$ for which
\[
|\Sigma(S')|>\frac{|\subgp{S'}|}{2}\, .
\]
\end{itemize}
\end{proposition}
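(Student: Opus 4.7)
The plan is to exploit the AP-structure of $\hat S$ modulo $H$ to compute $|\Sigma(S)|$ fibre-by-fibre in $G/H$. I first apply Theorem~\ref{thm:HP04} to $\hat S$, which is legal since $\hat S$ generates $G$, contains $0$, and satisfies $|\hat S|\le |G|/2$ (as needed to define the AP-representation). This produces a proper subgroup $H<G$ with $|\hat S+H|<|\hat S|+|H|$ and $\phi(\hat S)$ an arithmetic progression in $G/H$; the symmetry of $\hat S$ around $0$ forces this progression to be centred, so $\phi(\hat S)=\{-rd,\ldots,0,\ldots,rd\}$ for some $d\in G/H$ and some $r\ge 1$ (the case $r=0$ would force $\hat S\subseteq H$, contradicting that $S$ generates $G$ while $H$ is proper). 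Using Lemma~\ref{olsonrepl} I then swap $x\leftrightarrow -x$ for every $x\in S$ with negative image, preserving $|\Sigma(S)|$ while placing every element of $S\setminus H$ in a positive fibre. Writing $S^{+}=S\setminus H$, $S^{0}=S\cap H$, $\alpha_i:=|S^{+}\cap\phi^{-1}(id)|$ and $M:=\sum_{i=1}^{r}i\alpha_i$, the AP-structure forces $\alpha_i\ge 1$ for every $i=1,\ldots,r$.

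The case $|H|=1$ serves as a short warm-up: $S$ becomes (up to swap) exactly $\{d,2d,\ldots,rd\}\subseteq\subgp{d}=G$, so $\Sigma(S)=\{kd:0\le k\le r(r+1)/2\}$ either has $|S|(|S|+1)/2+1$ distinct elements, giving~(i), or fills $\subgp{d}=\subgp{S}$, giving~(ii) with $S'=S$. In the main case $|H|\ge 2$, I analyse $\Sigma(S)=\Sigma(S^{+})+\Sigma(S^{0})$ coset-by-coset in $G/H$. The projection $\phi(\Sigma(S))=\{kd:0\le k\le M\}\bmod|\subgp{d}|$ (the intermediate integers are realised since $\alpha_1\ge 1$) has size $L:=\min(M+1,|\subgp{d}|)$. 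Setting $\Pi_k:=\{\sigma(T):T\subseteq S^{+},\ \phi(\sigma(T))=kd\}$, Kneser's Theorem~\ref{thm:Kneser} applied inside the coset $\phi^{-1}(kd)$ yields
\[
|\Sigma(S)\cap\phi^{-1}(kd)|\ =\ |\Pi_k+\Sigma(S^{0})|\ \ge\ |\Pi_k|+|\Sigma(S^{0})|-1,
\]
so summing over cosets gives $|\Sigma(S)|\ge |\Sigma(S^{+})|+L(|\Sigma(S^{0})|-1)$. Each $\Pi_k$ is itself a Minkowski sum $\sum_i(n_i\wedge S^{+}_i)$ over compositions $\sum_i in_i=k$; Theorem~\ref{kwedgeA} applied within each fibre, combined with Kneser across fibres, renders $|\Sigma(S^{+})|$ genuinely quadratic in $|S^{+}|$. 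I bound $|\Sigma(S^{0})|$ below by $2|S^{0}|$ via Lemma~\ref{Lemme 2S}, or by $2|S^{0}|+1$ via Lemma~\ref{Lemme 2S+1} when $|S^{0}|\ge 4$. Substituting $|S|=\alpha_0+\sum\alpha_i$ and the elementary inequality $M\ge \sum\alpha_i+\binom{r}{2}$ into the displayed bound, a case-laden but elementary calculation should deliver $|\Sigma(S)|\ge |S|(|S|+1)/2+1$, i.e., conclusion~(i).

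The main obstacle I anticipate lies in two boundary regimes. First, the wrap-around regime $|\subgp{d}|\le M$ (so $L<M+1$): here the fibre bound above is too weak to give~(i), and I instead plan to extract $S'$ realising~(ii). Concretely, the small order of $d$ in $G/H$ forces $\Sigma(S^{+})$ to cover at least half of $\subgp{S^{+}}$, and then combining with a carefully chosen part of $S^{0}$ via the prehistoric lemma (Lemma~\ref{lem:prehistoric}) should yield $|\Sigma(S')|>|\subgp{S'}|/2$. Second, the concentrated regime $r=1$ with $\alpha_1$ large: here the full strength of Theorem~\ref{kwedgeA} (together with careful tracking of its $2$-coset exception) is required to recover the quadratic bound with the extra $+1$. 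Threading these case distinctions together, without losing the sharp constant in~(i), is where the technical work will concentrate.
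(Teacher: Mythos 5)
Your overall architecture matches the paper's: pass to the subgroup $H$ from Theorem~\ref{thm:HP04}, use Lemma~\ref{olsonrepl} to push $S$ into the non-negative fibres, bound $\Sigma(S)$ coset-by-coset in $G/H$ using Theorem~\ref{kwedgeA} and the prehistoric lemma, and divert the wrap-around case into conclusion $(ii)$. But there is a genuine gap at the quantitative core. You record the Hamidoune--Plagne inequality $|\hat S+H|<|\hat S|+|H|$ and the standing assumption that $(ii)$ fails, and then never use either of them in the main case. These two facts are exactly what make the count work: writing $t=|S\cap H|$, $h=|H|$ and $u=\sum_{i\ge 1}|Q_i\setminus S_i|$, the failure of $(ii)$ together with Lemma~\ref{Lemme 2S} forces $t\le h/4$ (else $|\Sigma(S\cap H)|\ge 2t>h/2\ge|\subgp{S\cap H}|/2$), and the inequality $|\hat S+H|<|\hat S|+|H|$ forces $u\le t$. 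Hence every non-zero fibre satisfies $|S_i|\ge h-u\ge 3h/4$: the fibres are almost full. This is what lets the prehistoric lemma place \emph{entire} $H$-cosets $Q_j$, $1\le j\le \ell-1$, inside $\Sigma(S)$ (where $\ell=\sum_{i\ge1} i|S_i|$), and what yields the strong estimate $\ell\ge hv(v+1)/2-uv$ in place of your $M\ge\sum\alpha_i+\binom{r}{2}$.

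Without these inputs your claimed intermediate bounds are too weak to reach $(i)$. Two concrete failure points. First, the cross term: your chain gives $|\Sigma(S)|\ge|\Sigma(S^{+})|+L\bigl(|\Sigma(S^{0})|-1\bigr)$ with $L\ge M+1\ge |S^{+}|+\binom{r}{2}+1$ and $|\Sigma(S^{0})|\ge 2|S^{0}|$, and for this to absorb the cross term $|S^{+}||S^{0}|+|S^{0}|^2/2$ in the target you need roughly $|S^{+}|+r^2\gtrsim|S^{0}|/2$; with only $\alpha_i\ge1$ this can fail badly (e.g.\ $r=1$, $\alpha_1=1$, $|S^{0}|$ large), and such configurations are excluded only by the two claims above, which give $|S^{0}|\le h/4$ while $|S^{+}|\ge 3rh/4$. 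Second, the per-coset bound: with only $\alpha_i\ge1$, Theorem~\ref{kwedgeA} plus Kneser inside a coset gives $|\Pi_k|$ of order $\max_i\alpha_i$ rather than $h$, and your step $|\Pi_k+\Sigma(S^{0})|\ge|\Pi_k|+|\Sigma(S^{0})|-1$ silently assumes aperiodicity of the sum inside the coset, which Kneser does not hand you. The paper sidesteps both issues by first proving the near-fullness of the fibres and then showing $Q_j\subseteq\Sigma(S)$ outright via the prehistoric lemma. Your plan is repairable, but only by adding precisely these two facts; as written, the ``case-laden but elementary calculation'' does not close.
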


The following two lemmas make claims concerning $\max_{x\in S}\lambda_B (x)$ for subsets $S,B$ of a finite abelian group $G$.  These bounds, applied with $B=\Sigma(S)$, are precisely what is required for our inductive proof of Theorem~\ref{thm:main}.

\begin{lemma}\label{lem:VG} Let $G$ be a finite abelian group, and let $B,S$ be subsets of $G$ with $|B|=b\le |G|/2$ and $|S|=s\ge 3$.  Assume $S$ generates $G$, that $S\cap (-S)=\emptyset$ and that $\hat{S}$ has a Vosper-representation.  Then
$$\max_{x \in S} \lambda_B(x) > s - \frac{s(s-3)}{b}.$$
In particular, if $2b\ge s(s-3)$, then
$$\max_{x \in S} \lambda_B(x) \ge s-1.$$
\end{lemma}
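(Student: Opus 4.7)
My plan is to argue by contradiction: assume $m := \max_{x \in S} \lambda_B(x) \le s - s(s-3)/b$, and squeeze $|B + \hat{S}|$ between an upper bound coming from subadditivity and a lower bound coming from the Vosper-representation of $\hat{S}$ to reach a contradiction.

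The upper bound is routine. By (\ref{eq:-x}) together with $\lambda_B(0) = 0$, we have $\lambda_B(y) \le m$ for every $y \in \hat{S}$. Enumerating $\hat{S}\setminus\{0\}$ as $y_1,\dots,y_{2s}$ and using the obvious inequality $|B + (X \cup \{y\})| \le |B + X| + \lambda_B(y)$ at each step starting from $X = \{0\}$ yields
\[
|B + \hat{S}|\, \le\, b\, +\, \sum_{y \in \hat{S} \setminus \{0\}} \lambda_B(y)\, \le\, b + 2sm.
\]

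For the lower bound, let $H$ be the subgroup from the Vosper-representation of $\hat{S}$ and let $\phi : G \to G/H$ denote the canonical projection, so that $\bar{S} := \phi(\hat{S})$ is a Vosper subset of $G/H$ and $|\hat{S} + H| < \min(|G|,\, |H| + |\hat{S}|)$. Writing $\bar{B} := \phi(B)$, the Vosper property applied to $\bar{S}$ and $\bar{B}$ (when $|\bar{B}| \ge 2$) gives
\[
|\bar{S} + \bar{B}|\, \ge\, \min\bigl(|G/H| - 1,\, |\bar{S}| + |\bar{B}|\bigr).
\]
A coset-by-coset lifting, using intra-coset Cauchy--Davenport or Kneser estimates together with the tight constraint $|\hat{S} + H| < |H| + |\hat{S}|$, should produce a lower bound on $|B + \hat{S}|$ that, combined with $|B + \hat{S}| \le b + 2sm$, forces $m > s - s(s-3)/b$ as required. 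The degenerate case $|\bar{B}| = 1$ (so $B$ lies in a single $H$-coset) is dispatched by translating $B$ into $H$ and running the whole argument inside $H$, where $\hat{S} + H$ is controlled by the second constraint of the Vosper-representation. The ``in particular'' clause is then immediate: $2b \ge s(s-3)$ gives $s(s-3)/b \le 2$, hence $m > s - 2$, and integrality yields $m \ge s - 1$.

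The main obstacle is the lifting step. Vosper's theorem controls only how many $H$-cosets $B + \hat{S}$ meets in $G/H$, not how many elements each such coset contributes, so a careful accounting of the distribution of $B$ across $H$-cosets, together with the ``deficit'' $|H| + |\hat{S}| - |\hat{S} + H|$ furnished by the Vosper-representation, is needed to land on precisely the constant $s - s(s-3)/b$ rather than a weaker value. I would also expect to need to split the saturated subcase (where Vosper gives only $|G/H|-1$) from the unsaturated one, and to verify that the various boundary configurations ($|\bar{B}|=1$, $H = \{0\}$, and so on) all yield the desired inequality.
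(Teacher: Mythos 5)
Your framework cannot reach the stated bound: it is quantitatively short by a factor of roughly $s$. From $|B+\hat{S}|\le b+\sum_{y\in\hat{S}\setminus\{0\}}\lambda_B(y)\le b+2sm$, the only way to force $m>s-s(s-3)/b$ (which is close to $s$ in the regime of interest, e.g.\ when $2b\ge s(s-3)$) is to produce a lower bound of the form $|B+\hat{S}|>b+2s^2-O(s)$. But the Vosper property of $\phi(\hat{S})$, even after a perfect coset-by-coset lifting, can only ever certify $|B+\hat{S}|\ge |B|+|\hat{S}|=b+2s+1$ --- that is all a Vosper/Cauchy--Davenport-type inequality asserts --- and this order of magnitude is genuinely attained (take $B=j\hat{S}$, so that $B+\hat{S}=(j+1)\hat{S}$ and the increment is of order $|\hat{S}|$, not $s^2$). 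Your two bounds therefore combine to give only $m\ge 2$ or so, nowhere near $m\ge s-1$. The obstacle is not the lifting step you flag; it is that a single sumset $B+\hat{S}$ contains too little information, because the sets $(B+y)\setminus B$ for the $2s$ elements $y\in\hat{S}\setminus\{0\}$ overlap heavily, so $\sum_y\lambda_B(y)$ can vastly exceed $|B+\hat{S}|-|B|$.

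The paper's proof (following Olson and Hamidoune--Llad\'o--Serra) repairs exactly this deficiency. First, the Vosper-representation is used to show that $S$ is \emph{faithful}, i.e.\ $|j\hat{S}|\ge\min(|G|,j(|\hat{S}|-1)+1)$ for all $j\ge 1$ (Lemma~\ref{lem:faithful}, via Lemma~\ref{lem:2hats} and the prehistoric lemma); this linear growth of the iterated sumsets is the only place the Vosper hypothesis is used. It supplies a set $C\subseteq G\setminus\{0\}$ of size $t=2b-3$, partitioned into pieces $C_j\subseteq j\hat{S}$ with $|C_j|=2s$. Sub-additivity (\ref{eq:x+y}) gives $\lambda_B(c)\le j\alpha$ for $c\in C_j$, where $\alpha=\max_{x\in S}\lambda_B(x)$, hence $\sum_{c\in C}\lambda_B(c)\le\alpha(t+s)^2/4s$; while the clique inequality (\ref{eq:clique}) gives $\sum_{c\in C}\lambda_B(c)\ge b(t-b+1)=b(b-2)$, which is of order $b^2$ precisely because $|C|\approx 2b$ is much larger than $2s$. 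Comparing the two bounds yields $\alpha\gtrsim s$. To salvage your plan you would need to import both missing ingredients: the clique inequality applied to a set of size about $2b$, and the linear growth of $|j\hat{S}|$ in $j$ that makes such a set available with controlled $\lambda_B$-values.
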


\begin{lemma}\label{lem:VI}
Let $G$ be a finite abelian group of odd order, and let $B,S$ be subsets of $G$ with $|B|=b\le |G|/2$ and $|S|=s\ge 3$.  Assume $S$ generates $G$, that $S \cap (-S)=\emptyset$ and that $\hat{S}$ has a Vosper-representation. 
Let also $t$ be an integer, $1 \le t \le |G|-1$, and set
$$t=r(2s+2)+q,  \text{ where } -1 \le q \le 2s.$$
Then
$$\max_{x \in S} \lambda_B(x) \ge \frac{4(s+1)b(t-b+1)}{t(t+2s+6)+q(2s-q-2)}.$$
\end{lemma}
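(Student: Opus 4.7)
The plan is to adapt Olson's method by choosing a set $C$ of $t$ nonzero elements and double-counting $\sum_{y\in C}\lambda_B(y)$, bounding it below by the clique inequality~(\ref{eq:clique}) and above by subadditivity~(\ref{eq:x+y}). First I set $M=\max_{x\in S}\lambda_B(x)$; since $\lambda_B(0)=0$ and $\lambda_B(-x)=\lambda_B(x)$ by~(\ref{eq:-x}), one has $M=\max_{x\in \hat{S}\setminus\{0\}}\lambda_B(x)$. Defining the length $\ell(y)=\min\{k\ge 0 : y\in k\hat{S}\}$ (with $0\hat{S}:=\{0\}$), iterating subadditivity gives $\lambda_B(y)\le \ell(y)M$ for every $y\in G$. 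Taking $C$ to consist of $t$ nonzero elements of smallest length, I obtain
\[
b(t-b+1)\ \le\ \sum_{y\in C}\lambda_B(y)\ \le\ M\sum_{y\in C}\ell(y),
\]
so the task reduces to establishing $\sum_{y\in C}\ell(y)\le D/(4(s+1))$, where $D=t(t+2s+6)+q(2s-q-2)$.

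The heart of the proof is a layer-size lower bound. Setting $L_k:=k\hat{S}\setminus(k-1)\hat{S}$, one has $|L_1|=2s$ since $L_1=S\cup(-S)$ and $S\cap(-S)=\emptyset$. The key claim is that $|L_k|\ge 2s+2$ for every $k\ge 2$ for which $(k-1)\hat{S}\ne G$. I would combine two ingredients. First, Vosper-type growth: because $\phi(\hat{S})$ is Vosper in $G/H$ (where $H$ is the subgroup from the Vosper-representation), one can transfer the growth $|k\phi(\hat{S})|\ge|\phi(\hat{S})|+|(k-1)\phi(\hat{S})|$ back to $G$ using the structural bound $|\hat{S}+H|<|H|+|\hat{S}|$ together with Kneser's theorem (Theorem~\ref{thm:Kneser}) applied to $\hat{S}+(k-1)\hat{S}$, obtaining $|L_k|\ge|\hat{S}|=2s+1$. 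Second, parity: $k\hat{S}$ is symmetric and contains $0$, and in an odd-order group each nonzero element is paired with a distinct negative; hence $|k\hat{S}|$ is odd, which forces $|L_k|$ to be even for $k\ge 2$ and upgrades the bound to $|L_k|\ge 2s+2$.

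Given this layer-size bound, the greedy allocation---fill layer $1$ with its $2s$ elements, then layers $2,\dots,r$ with $2s+2$ elements each, then take $q+2$ more elements from layer $r+1$---yields
\[
\sum_{y\in C}\ell(y)\ \le\ 2s+(s+1)(r-1)(r+2)+(r+1)(q+2),
\]
an expression that remains valid uniformly in $r\ge 0$ (with the $r=0$ case collapsing to $q$). A direct expansion confirms that $4(s+1)$ times the right-hand side equals $D$, and substitution back into the double-counting chain completes the proof.

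The main obstacle I expect is the layer-size step when the subgroup $H$ of the Vosper-representation is non-trivial. In that case $\hat{S}$ is not itself Vosper in $G$, so one must carefully lift the Vosper growth from $G/H$ to $G$, balancing the potential period of $k\hat{S}$ against the inequality $|\hat{S}+H|<|H|+|\hat{S}|$ so that the resulting bound is sharp enough for the parity argument to close the gap between $2s+1$ and $2s+2$.
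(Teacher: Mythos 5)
Your proposal is correct and follows essentially the same route as the paper: the paper's notion of a \emph{super faithful} set is exactly your layer-size bound $|j\hat{S}|\ge j(2s+2)-1$, proved likewise by combining the Vosper property of $\phi(\hat{S})$ with the parity of symmetric sets containing $0$ in an odd-order group (the paper runs the parity argument in $G/H$ and lifts via the $H$-periodicity of $j\hat{S}$, which is the careful transfer you flag as the main obstacle), and the double count of $\sum_{c\in C}\lambda_B(c)$ via the clique inequality and subadditivity, with the same greedy allocation $2s,\,2s+2,\dots,2s+2,\,q+2$, is identical. The only nitpick is that your layer claim should be conditioned on $k\hat{S}\neq G$ rather than $(k-1)\hat{S}\neq G$, but this does not affect the greedy bound.
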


We now observe that Theorem~\ref{thm:main} is an immediate consequence of the above proposition and lemmas.  In fact the first part of Theorem~\ref{thm:main} may be deduced from Proposition~\ref{prop:AP} and Lemma~\ref{lem:VG}, while Lemma~\ref{lem:VI} is required for the stronger bound in the case $|G|$ is odd. 

\medskip
In the proof of Theorem~\ref{thm:main}, we will refer to $S$ as a \em valid subset \em of $G$ whenever
\[
|\Sigma(S')|\le \frac{|\subgp{S'}|}{2}
\]
for all non-empty subsets $S'\ssq S$.

\begin{proof}[Proof of Theorem~\ref{thm:main}]
Let $S$ be a generating subset of $G$ such that $S\cap (-S)=\emptyset$ and $|S|\ge 2$. 
We begin by proving the first part of Theorem~\ref{thm:main}. Without loss of generality, we may also assume that $S$ is a valid subset of $G$, 
else property $(ii)$ holds and the proof is complete.
Now, setting $\hat{S}=S\cup \{0\}\cup (-S)$, Lemma~\ref{Lemme 2S+1} yields 
\[ |\hat{S}|=2|S|+1\le |\Sigma(S)|\le |G|/2.\]
Thus, it follows from Theorem~\ref{thm:HP04} that $\hat{S}$ has either an AP-representation or a Vosper-representation.
We must prove that 
\[
|\Sigma(S)|\ge\frac{|S|(|S|-1)}{2}+3.
\]
The base cases that $|S|=2,3$ may be checked by hand, while the base case that $\hat{S}$ has an AP-representation follows from Proposition~\ref{prop:AP}.  We now proceed to the induction step.  

Assume $|S|=s \ge 4$ and that
$\hat{S}$ has a Vosper-representation.  
Let $B=\Sigma(S)$ and $b=|B|$.  Since $S$ is a valid subset of $G$, one has $b\le |G|/2$. We prove the required bound $b\ge 3+s(s-1)/2$ by considering $b=|\Sigma(S)|\ge |\Sigma(S\setminus \{x\})|+\lambda_B (x)$ for an appropriately chosen $x\in S$.  An initial lower bound on $b$ may be obtained by selecting an arbitrary element $x\in S$ and using that 
\[
b= |\Sigma(S)| \ge |\Sigma(S \setminus \{x\})| \ge 3 + \frac{(s-2)(s-1)}{2},
\]
where the final inequality follows from the induction hypothesis.  It follows that
\[
2b \ge 6+(s-2)(s-1) > s(s-3).
\]
Now, by Lemma~\ref{lem:VG} there is an element $x \in S$ with $\lambda_B(x) \ge s-1$.
Thus
$$|\Sigma(S)| \ge |\Sigma(S \setminus \{x\})|+\lambda_B(x) \ge 3 + \frac{(s-2)(s-1)}{2} + (s-1) = 3 + \frac{s(s-1)}{2},$$ 
as required.

\medskip
For the second part of Theorem~\ref{thm:main}, the stronger bound in the case that $|G|$ is odd, we proceed by induction with the same base cases.  
For the induction step, assume $|S|=s \ge 4$ and that $\hat{S}$ has a Vosper-representation.   
One can distinguish the following two cases.

\medskip
\textbf{Case I. There is an element $x\in S$ such that $\subgp{S\setminus \{x\}}$ is a proper subgroup of $\subgp{S}$.}

In this case, the induction step is easy.  We simply use that $\xi(S\setminus \{x\})\ge 0$ to obtain 
\[
|\Sigma(S)|=2|\Sigma(S\setminus \{x\})| \ge (s-1)s\ge \frac{s(s+1)}{2}+1\, .
\]

\textbf{Case II. $\subgp{S\setminus \{x\}}=\subgp{S}$ for all $x\in S$.}

Let $B=\Sigma(S)$ and $b=|B|$.  Since $S$ is a valid subset of $G$, one has $b\le |G|/2$. 
Arguing as in the first part of the proof, there exists an element $x\in S$ such that $\lambda_B (x) \ge s-1$.  It follows, by the induction hypothesis, that 
\begin{align*}
|\Sigma(S)|\, & \ge \, |\Sigma(S\setminus \{x\})|+(s-1)\\
& \ge \, \frac{s(s-1)}{2}+\xi(S\setminus \{x\})+s-1\\
& = \, \frac{s(s+1)}{2}+\xi(S\setminus \{x\})-1\, .
\end{align*}
In the special case that $\xi(S\setminus \{x\})=1$ and $\xi(S)=0$ this bound is sufficient to complete the proof.  If $\xi(S\setminus \{x\})=0$, it follows that 
\[
2s^2-s-1 = 2(s-1)^2+3(s-1) > 2|\subgp{S\setminus \{x\}}|+5 =2|\subgp{S}|+5\, ,
\]
and so
\[
|\Sigma(S)|\ge \frac{s(s+1)}{2} - 1=\frac{2s^2+2s-4}{4} >\frac{2|\subgp{S}|}{4}=\frac{|G|}{2} \, ,
\]
a contradiction, since $S$ is a valid subset of $G$.  Thus, the only remaining case is that $\xi(S\setminus \{x\})=\xi(S)=1$.  In particular we can assume that
\[
\begin{cases}
s^2+s-2 \le |G|-1 & \quad \text{if $s$ is even}\\
s^2 +\frac{3}{2}s-\frac{7}{2} \le |G|-1 & \quad \text{if $s$ is odd}
\end{cases}
\]
Since we have that $b\ge s(s+1)/2$ and the proof is completed when we prove $b\ge 1+ s(s+1)/2$ we may assume for contradiction that $b=s(s+1)/2$.  However, one may now apply Lemma~\ref{lem:VI}, with
\[
t=
\begin{cases}
s^2+s-2&\quad \text{if $s$ is even}\\
s^2 +\frac{3}{2}s-\frac{7}{2}&\quad \text{if $s$ is odd}
\end{cases}
\]
and 
\[
q=
\begin{cases}
2s & \quad \text{if $s$ is even}\\
\frac{3}{2}s-\frac{5}{2}& \quad \text{if $s$ is odd}
\end{cases}
\]
to obtain that $\max_{x\in S}\lambda_B (x)> s-1$.  In particular, there exists $x\in S$ with $\lambda_B (x) \ge s$ and so
\[
|\Sigma(S)| \ge |\Sigma(S\setminus \{x\})|+s\ge \frac{s(s-1)}{2}+1+s =\frac{s(s+1)}{2}+1\, ,
\]
as required.
\end{proof}

We prove Proposition~\ref{prop:AP} in Section~\ref{sec:APcase} and Lemmas~\ref{lem:VG} and \ref{lem:VI} in Sections~\ref{sec:VG} and ~\ref{sec:VI} respectively.

\subsection{The case that $\hat{S}$ has an AP-representation: A proof of Proposition~\ref{prop:AP}}\label{sec:APcase}
Let $G$ be a finite abelian group and $S\subseteq G$ a generating subset such that $S\cap (-S)=\emptyset$, $|S|\ge 4$ and $\hat{S}$ has an AP-representation.
Let $H$ be a subgroup of $G$ with
\begin{equation}\label{APdef}
|\hat{S} + H| < \min(|G|, |\hat{S}| + |H|)
\end{equation}
and with $\phi(\hat{S})$ being an arithmetic progression in $G / H$.  
We may also assume throughout the proof that
\begin{equation}\label{nopropii}
|\Sigma(S')|\le \frac{|\subgp{S'}|}{2}
\end{equation}
for all non-empty subsets $S'\ssq S$ (i.e., $S$ is a valid subset of $G$), else property $(ii)$ of Proposition~\ref{prop:AP} holds and the proof is complete.

\medskip
Now, since $S$ is a generating subset of $G$, it follows that $G / H$ is a cyclic group. Thus, we may write $G/H$, the group of $H$-cosets in $G$, as follows
$$G / H \simeq \mathbb{Z}/m\mathbb{Z} \simeq \left\{Q_i : i=0,\dots,m-1\right\},$$
and we may assume, without loss of generality, that $\phi(\hat{S})$ has difference $Q_1$, so that $\phi(\hat{S})=\{Q_{-v},\dots ,Q_{v}\}$, for some $v\ge 1$.
We consider the partition
$$\hat{S}=\hat{S}_{-v} \cup \dots \cup \hat{S}_{-1} \cup \hat{S}_0 \cup \hat{S}_1 \cup \dots \cup \hat{S}_v,$$ 
where $\hat{S}_i=\hat{S}\cap Q_i$ for all $i \in \{-v,\dots,v\}$. 
Note that, by the symmetry of $\hat{S}$, we have $\hat{S}_{-i}=-\hat{S}_{i}$ for all $i \in \{-v,\dots,v\}$. 
Since Lemma~\ref{olsonrepl} allows us to swap an element $x\in S$ for $-x$ we may suppose that
\[
S=S_0 \cup S_1 \cup \dots \cup S_v\, ,
\]
where $S_i=\hat{S}_i$ for all $i \in \{1,\dots,v\}$ and $|\hat{S}_0|=2|S_0|+1$. 

We use the following notation:
\[
t:= |S_0|\qquad u:=\sum_{i=1}^{v}|Q_i\setminus S_i|\qquad \text{and}\qquad \ell:=\sum_{i=1}^{v}i|S_i|\, ,
\]
and write $h$ for $|H|$.  Note that, in this notation,
\[
|S|=vh+t-u\, ,
\]
and, by Lemma~\ref{Lemme 2S},
\[
|\Sigma(S_0)|\ge 2t\, .
\]

We now establish the following claims.

\bigskip
\textbf{Claim I.} $t\le h/4$.

\begin{proof}
If $|S_0|=t> h/4$, then $|\Sigma(S_0)|\ge 2t >h/2\ge |\subgp{S_0}|/{2}$, contradicting \eqref{nopropii}.
\end{proof}

\textbf{Claim II.} $u\le t$.

\begin{proof}
Since $|\hat{S}|=2|S|+1=2vh+2t-2u+1$, and $|\hat{S}+H|=(2v+1)h$, it follows from ~\eqref{APdef} that
\[
2t-2u+1>0\, ,
\]
and the required bound follows.
\end{proof}

\textbf{Claim III.} $\ell\ge hv(v+1)/2\,  -\, uv$.

\begin{proof}
Since the cardinalities $|S_1|,\dots ,|S_v|$ obey $0\le |S_i|\le h$ and $\sum_{i=1}^{v}|S_i|=vh-u$, the sum $\ell=\sum_{i=1}^{v}i|S_i|$ is minimised by taking $|S_i|=h$ for $i=1,\dots ,v-1$ and $|S_v|=h-u$, and in this case one obtains $\ell= hv(v+1)/2\,  -\, uv$.
\end{proof}

We now prove the key lemma from which we shall deduce our bound on $|\Sigma(S)|$.  The idea behind the proof is that $\Sigma(S)$ should contain all of the elements of all of the cosets $Q_1,\dots ,Q_{\ell-1}$ together with a few more elements in the case $t>0$.  

\begin{lemma}\label{lem:manysums} Let $S, h, \ell, v$ and $t$ be as defined above.  Then $|\Sigma(S)|\ge (\ell -1)h+4t$.\end{lemma}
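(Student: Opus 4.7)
My plan is to analyse $\Sigma(S)$ coset-by-coset in $G/H$, starting from the decomposition $\Sigma(S) = \Sigma(S_0) + \Sigma(S')$, where $S' := S_1 \cup \cdots \cup S_v$. Since $\Sigma(S_0) \ssq H = Q_0$, one has
\[
\Sigma(S) \cap Q_j \,=\, \Sigma(S_0) + \bigl(\Sigma(S') \cap Q_j\bigr)
\]
for each $j$. The support $\phi(\Sigma(S))$ equals exactly $\{Q_0, \ldots, Q_\ell\}$: since $\phi(\hat S)$ is an AP of length $2v+1$ each $c_i := |S_i| \ge 1$, and so every $j \in \{0, \ldots, \ell\}$ is representable as $\sum_i i k_i$ with $0 \le k_i \le c_i$. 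The central step is then to show that for each $j \in \{1, \ldots, \ell - 1\}$ the intersection $\Sigma(S) \cap Q_j$ equals the whole coset $Q_j$, contributing $h$ elements.

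To achieve this, I would select a representation $j = \sum_i i k_i$ with some index $i^*$ satisfying $k_{i^*} \in [1, c_{i^*} - 1]$, and with the remaining $k_i \in \{0, c_i\}$ so that each $k_i \wedge S_i$ ($i \neq i^*$) is a singleton. By Claims I and II, $c_i \ge h - u \ge h - t \ge 3h/4$ for every $i$, so $|S_{i^*}| > h/2$ and equation (\ref{eq:kwedgeA}) gives
\[
|\Sigma(S') \cap Q_j| \,\ge\, |k_{i^*} \wedge S_{i^*}| \,\ge\, \min(h-1,\, c_{i^*}) \,\ge\, h - t.
\]
Combined with $|\Sigma(S_0)| \ge 2t$ from Lemma~\ref{Lemme 2S} (when $t \ge 1$), the sum of cardinalities is at least $h + t > h$, so the prehistoric lemma (Lemma~\ref{lem:prehistoric}) forces $\Sigma(S) \cap Q_j = Q_j$. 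Summing $h$ over the $\ell - 1$ middle cosets, together with $|\Sigma(S) \cap Q_0| \ge |\Sigma(S_0)| \ge 2t$ and (after translation by $\sigma(S')$) $|\Sigma(S) \cap Q_\ell| \ge 2t$, gives $|\Sigma(S)| \ge (\ell - 1)h + 4t$, as desired.

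The degenerate case $t = 0$ is even easier: Claim II forces $u = 0$, hence $S_i = Q_i$ for every $i \ge 1$, and Theorem~\ref{kwedgeA} already implies that each middle $k_{i^*} \wedge S_{i^*}$ is the full coset $Q_{i^* k_{i^*}}$; consequently $\Sigma(S) \cap Q_j = Q_j$ holds for every $j \in \{1, \ldots, \ell-1\}$ without any help from $\Sigma(S_0)$, and the bound reduces to $|\Sigma(S)| \ge (\ell - 1) h$.

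The main technical hurdle I foresee is the combinatorial verification that, for every $j \in \{1, \ldots, \ell - 1\}$, a representation $j = \sum_i i k_i$ with a non-boundary coordinate $k_{i^*}$ actually exists --- corner-only representations must be ruled out for each such $j$. The ample slack $c_i \ge 3h/4$ should permit an explicit shifting argument (starting from a greedy representation and transferring a unit between two indices $i, i'$ by modifying $(k_i, k_{i'})$ appropriately) to cover all cases, while the exceptional $2$-coset case of Theorem~\ref{kwedgeA} can be absorbed by the same slack, by avoiding $k_{i^*} \in \{2, c_{i^*} - 2\}$ in the choice of representation.
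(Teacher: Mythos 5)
Your main line for $t>0$ is essentially the paper's argument and is sound, but your treatment of $t=0$ has a genuine gap. You claim that when $t=0$ (hence $u=0$ and $S_i=Q_i$ for all $i\ge 1$) Theorem~\ref{kwedgeA} makes every middle $k_{i^*}\wedge S_{i^*}$ a full coset, so that $\Sigma(S)\cap Q_j=Q_j$ for all $j\in\{1,\dots,\ell-1\}$ "without any help from $\Sigma(S_0)$". This is false precisely in the exceptional case of Theorem~\ref{kwedgeA}: if $v=1$ and $H$ is an elementary $2$-group (so $S=S_1=Q_1$ is a $2$-coset), then $j=2$ has the \emph{unique} representation $k_1=2$, your proposed fix of "avoiding $k_{i^*}\in\{2,c_{i^*}-2\}$" is unavailable, and indeed $\Sigma(S)\cap Q_2=2\wedge S$ has exactly $h-1$ elements (likewise for $j=h-2$). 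Concretely, take $G=\mathbb{Z}_2^2\times\mathbb{Z}_{11}$, $H=\mathbb{Z}_2^2\times\{0\}$, $S=\mathbb{Z}_2^2\times\{1\}$. Since your $t=0$ accounting discards the two endpoint cosets, you only reach $(\ell-1)h-2$, not $(\ell-1)h$. This is exactly why the paper isolates $v=1$, $t=0$ as a separate base case and recovers the deficit of $2$ from the contributions $|\Sigma(S)\cap Q_0|=|\Sigma(S)\cap Q_\ell|=1$; you need either that bookkeeping or a genuine argument that for $v\ge 2$ the exceptional indices can always be dodged (which you flag but do not supply).

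Two smaller points. First, your count "sum $h$ over the $\ell-1$ middle cosets plus $2t$ at each end" tacitly assumes the cosets $Q_0,\dots,Q_\ell$ are pairwise distinct, i.e.\ $\ell<|G/H|$; the paper disposes of the wrap-around case separately by showing $|\Sigma(S)|>|G|/2$, contradicting~\eqref{nopropii}, and you should too. Second, you make the combinatorial hurdle harder than necessary by insisting that all coordinates other than $k_{i^*}$ sit at the corners $\{0,c_i\}$. The paper only requires \emph{some} $i_0$ with $0<k_{i_0}<|S_{i_0}|$ and the remaining $k_i$ anywhere in $[0,c_i]$, because $|(k_1\wedge S_1)+\dots+(k_v\wedge S_v)|\ge|k_{i_0}\wedge S_{i_0}|$ holds for any nonempty summands; with that relaxation the existence of a suitable representation of each $j\in\{1,\dots,\ell-1\}$ is routine, whereas your corner-only version genuinely needs the shifting argument you only sketch.
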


\begin{proof} We prove the lemma under the assumption $\ell<|G/H|$, in which case the cosets $Q_0,\dots ,Q_{\ell}$ are disjoint.  
Since it may be easily verified (by a similar approach) that $|\Sigma(S)|>|G|/2$, contradicting~\eqref{nopropii}, in the case that $\ell\ge |G/H|$ we may safely restrict to this case.

\medskip
We consider first the special case that $v=1$ and $t=0$. It suffices to prove that $\Sigma(S)\supseteq Q_j$ for $j\in \{1,\dots ,h-1\}\setminus \{2,h-2\}$, $|\Sigma(S)\cap Q_j|\ge h-1$ for $j\in \{2,h-2\}$ and $|\Sigma(S)\cap Q_j|=1$ for $j\in \{0,\ell\}=\{0,h\}$.  To prove these bounds we note that $\Sigma(S)\cap Q_j\supseteq j\wedge S$, and the various claimed bounds are either trivial or follow from Theorem~\ref{kwedgeA}.

\medskip
For the remaining cases we claim that 
\[
\Sigma(S)\supseteq \bigcup_{j=1}^{\ell-1} Q_j
\]
and $|\Sigma(S) \cap Q_j|\ge 2t$ for $j\in \{0,\ell\}$. 
It is immediate, since $|\Sigma(S_0)|\ge 2t$, that $|\Sigma(S) \cap Q_j|\ge 2t$ for $j\in \{0,\ell\}$.  The proof that $\Sigma(S)\supseteq Q_j$ for all $j\in \{1,\dots ,\ell-1\}$ proceeds slightly differently in the cases $t=0$ and $t>0$.  We note that this fact is trivial if $h=1$, so we may assume that $h\ge 2$.

\medskip
If $t>0$ then we recall that $|\Sigma(S_0)|\ge 2t$ and that $u\le t$ (Claim II).  We also have that $h\ge 4$ (Claim I) and $|S_i|\ge \frac{3}{4}h>2$ for all $i$ (Claims I and II).  Fix $j\in \{1,\dots ,\ell-1\}$ and note that, by the definition of $\ell$, there exists a sequence $k_1,\dots , k_v$ such that 
\[
\sum_{i=1}^{v}i k_i= j
\]
where $0\le k_i\le |S_i|$ for all $i$, and $0<k_{i_0}<|S_{i_0}|$ for some $i_0\in \{1,\dots ,v\}$.
The claim that $Q_j\subseteq \Sigma(S)$ now follows from the prehistoric lemma and the observation that 
\[
\Sigma(S)\cap Q_j \supseteq (k_1\wedge S_1) + \dots +(k_v\wedge S_v) +\Sigma(S_0)\, ,
\]
since 
\[
|(k_1\wedge S_1) + \dots +(k_v\wedge S_v)|\ge |k_{i_0}\wedge S_{i_0}|\ge \min(h-1,h-u)
\]
(by \eqref{eq:kwedgeA}) and
\[
|\Sigma(S_0)|\ge 2t\ge \max(2,2u)
\]
sum to more than $h=|H|=|Q_j|$.

The argument in the case that $t=0$ is similar, except on this occasion we use that $j$ may be expressed as
\[
\sum_{i=1}^{v}i k_i= j
\]
where $0\le k_i\le h$ for all $i$, and either $0<k_{i}<h$ for two values of $i \in \{1,\dots ,v\}$ or $k_{i_0}\in \{1,h-1\}$ for some $i_0$.  In the latter case we observe immediately that 
\[
|\Sigma(S)\cap Q_j|\ge |(k_1\wedge S_1) + \dots +(k_v\wedge S_v)|\ge |k_{i_0}\wedge S_{i_0}|=|S_{i_0}|=h\, ,
\]
which implies that $Q_j\subseteq \Sigma(S)$ as required.  In the former case we simply use the prehistoric lemma applied to the two sets $k_i\wedge S_i$ for which $0<k_{i}<h$ and Theorem~\ref{kwedgeA}, as above to obtain $Q_j\subseteq \Sigma(S)$.\end{proof}

We may now read out the bound 
\[
|\Sigma(S)|\ge \frac{|S|(|S|+1)}{2}+1\, ,
\]
completing the proof of Proposition~\ref{prop:AP}.  We prove that the quantity $\Delta:=|\Sigma(S)|-|S|(|S|+1)/2$ satisfies $2\Delta\ge 2$, as required.
By combining Lemma~\ref{lem:manysums} with Claim III we obtain that
\begin{align*}
2\Delta  \, & = \, 2|\Sigma(S)|-|S|(|S|+1)\\
&\ge \,  (hv(v+1)\,  -\, 2uv - 2)h  + 8t \, -\,  (hv+t-u)(hv+t-u+1)\\
&= \, h^2 v^2 +h^2 v -2uhv -2h + 8t -h^2v^2 -(2t-2u+1)hv -(t-u)(t-u+1) \\
&= \, h^2 v -2h +8t - (2t+1)hv -(t-u)(t-u+1)\, ,
\end{align*}
where the final line is obtained simply by canceling terms.  We first complete the proof in the case that $h\ge 4$.  We shall deal with the special cases $h\in \{1,2,3\}$ separately.  Since $h\ge 4$ we have that $vh\ge 4$ and so $8t-(2t+1)vh$ is decreasing in $t$.  Note also that, since $t\ge u$, the term $-(t-u)(t-u+1)$ is also decreasing in $t$.  Thus, the final expression above is decreasing in $t$.  Since $t\le h/4$ (by Claim I), and $u\ge 0$, we have that
\begin{align*}
2\Delta \, &\ge\, h^2 v -\frac{h^2 v}{2}- hv - \frac{h^2}{16}- \frac{h}{4}\\
& \ge h^2 v\left(1-\frac{1}{2}-\frac{1}{4}-\frac{1}{16}-\frac{1}{16}\right)\\
& = \frac{h^2 v}{8}\\
& \ge 2\, .
\end{align*}

For the special cases $h\in \{1,2,3\}$ we have that $t=0$ by Claim I, and $u=0$ by Claim II.  One may easily check the result by hand for the case that $h\in \{1,2,3\}$ and $v=1$.  So let us assume that $v\ge 2$.  Proceeding as in the proof of Lemma~\ref{lem:manysums} and using the fact that $|\Sigma(S)\cap Q_j|\ge 1$ for $j\in \{0,\ell\}$ one obtains that $|\Sigma(S)|\ge (\ell-1)h+2$.  Combining this with the fact that $t=u=0$ and Claim III, we obtain that
\begin{align*}
2\Delta\, & \ge\,  h^2 v^2+ h^2 v-2h+4 -h^2 v^2 -hv\\
& = \, h^2 v - 2h +4 - hv
& = \,
\begin{cases}
2 & \text{ if } h=1,\\ 
2v & \text{ if } h=2, \\ 
6v-2 & \text{ if } h=3.  
\end{cases}
\end{align*}
Since each of these values is at least $2$ we obtain that $|\Sigma(S)|\ge 1+|S|(|S|+1)/2$, thus completing the proof of Proposition~\ref{prop:AP}.

\subsection{The case that $\hat{S}$ has a Vosper-representation: A proof of Lemma~\ref{lem:VG}}  \label{sec:VG}
In this section, we prove Lemma~\ref{lem:VG}.  That is, we show that if $B,S$ are subsets of a finite abelian group $G$ with $|B|=b\le |G|/2$ and $|S|=s\ge 3$, and we have the additional properties that $S$ generates $G$, that $S\cap (-S)=\emptyset$ and that $\hat{S}$ has a Vosper-representation, then
$$\max_{x \in S} \lambda_B(x) > s - \frac{s(s-3)}{b}.$$
This is sufficient since the second claim of Lemma~\ref{lem:VG} is an immediate consequence of the first. 

Our proof proceeds via demonstrating a certain rate of expansion of the sets $j\hat{S}$, when $S$ is as above.  We say that a subset $A$ of $G$ is \em faithful \em if, for every integer $j \ge 1$, one has 
$$|j\hat{A}| \ge \min\left(|G|,j(|\hat{A}|-1)+1\right).$$

It is clear that the required result follows immediately once we establish the following two lemmas.

\begin{lemma}\label{lem:faithful} Let $G$ be a finite abelian group, and let $S\ssq G$.  Assume $S$ generates $G$ and that $\hat{S}$ has a Vosper-representation.  Then $S$ is faithful.\end{lemma}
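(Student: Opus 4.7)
The plan is to proceed by induction on $j$, using Kneser's Addition Theorem as the main driving tool, and invoking the Vosper-representation only when the period of $j\hat{S}$ forces an obstruction. The base case $j = 1$ is immediate since $|\hat{S}| \ge 1 \cdot (|\hat{S}|-1) + 1$.

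For the inductive step, I would write $j\hat{S} = (j-1)\hat{S} + \hat{S}$ and apply Kneser's Theorem to obtain
\[
|j\hat{S}| \ge |(j-1)\hat{S} + K| + |\hat{S} + K| - |K|,
\]
where $K$ denotes the period of $j\hat{S}$. If $|(j-1)\hat{S}| = |G|$ the conclusion is immediate, so assume $|(j-1)\hat{S}| \ge (j-1)(|\hat{S}|-1) + 1$ by the inductive hypothesis. In the easy case $K = \{0\}$, Kneser gives $|j\hat{S}| \ge |(j-1)\hat{S}| + |\hat{S}| - 1 \ge j(|\hat{S}|-1) + 1$, and the inductive step is complete.

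The core of the proof, and the main obstacle, is therefore the case $K \ne \{0\}$. Here I would leverage the Vosper-representation. Let $H$ be the subgroup provided by Theorem~\ref{thm:HP04} and $A = \phi(\hat{S})$, which is a Vosper subset of $G/H$. Iterating the Vosper inequality (with $A$ itself, using that $|A| \ge 2$ since $|\hat{S}+H| < |G|$) yields $|jA| \ge \min(|G/H|-1,\, j|A|)$, and moreover a Kneser-type argument shows that the iterated sumset $jA$ is itself aperiodic in $G/H$ whenever it is a proper subset of $G/H$ missing more than one element. From this I would deduce that the period of $\phi(j\hat{S}) = jA$ in $G/H$ is trivial, so $\phi(K) = \{0\}$, i.e.\ $K \subseteq H$. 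At this point the Vosper-rep bound $(|A|-1)|H| < |\hat{S}|$ can be used to show $|\hat{S} + K| \ge |\hat{S}| + |K| - 1$ and $|(j-1)\hat{S} + K| \ge |(j-1)\hat{S}|$, which when substituted into Kneser recovers the required growth by $|\hat{S}| - 1$; edge cases (where $jA$ covers all or all-but-one of $G/H$) are treated directly by noting that $j\hat{S}$ then fills nearly every $H$-coset and already exceeds $\min(|G|, j(|\hat{S}|-1)+1)$.

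The technical heart is the claim that iterated sumsets of a Vosper set are aperiodic below saturation; I would prove it by a short Kneser argument inside $G/H$, observing that a nontrivial period of $jA$ would force $A + K' = A$ for some nontrivial $K' \le G/H$, contradicting the expansion property $|A + Y| \ge |A| + |Y|$ applied with $Y = A + K'$. Once this structural fact is in hand, the coset-by-coset bookkeeping to deduce $|j\hat{S}| \ge \min(|G|, j(|\hat{S}|-1)+1)$ from $K \subseteq H$ is routine, using Kneser once more together with the inductive hypothesis applied in $G/K$ (where $\hat{S}$ still has a Vosper-representation inherited from $H/K$).
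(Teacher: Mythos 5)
Your overall strategy (induction on $j$ via Kneser applied to $j\hat{S}=(j-1)\hat{S}+\hat{S}$, isolating the case where the period $K$ of $j\hat{S}$ is nontrivial) is genuinely different from the paper's, but the two steps you yourself identify as the technical heart both have real gaps. First, the claim that a nontrivial period $K'$ of $jA$ ``would force $A+K'=A$'' is not a valid inference: periodicity of a sumset gives no control on the periods of the individual summands (this is precisely why Kneser's theorem is phrased in terms of $|X+K|$ and $|Y+K|$ rather than $|X|$ and $|Y|$), and you offer no argument specific to Vosper sets that rules out $jA$ acquiring a nontrivial period strictly before saturation while $A$ itself stays aperiodic. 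Since this is exactly the step you use to conclude $\phi(K)=\{0\}$ and hence $K\subseteq H$, the hard case of your induction is not closed. Second, the inequality $|\hat{S}+K|\ge |\hat{S}|+|K|-1$ does not follow from $(|A|-1)|H|<|\hat{S}|$: if, say, $\hat{S}$ were $H$-periodic (which the Hamidoune--Plagne conclusion $|\hat{S}+H|<|\hat{S}|+|H|$ does not exclude) and $K=H$, then $|\hat{S}+K|=|\hat{S}|$ and the claimed gain of $|K|-1$ disappears.

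The paper's proof avoids the period of $j\hat{S}$ entirely, and the fact it proves first is the one your argument is missing: $2\hat{S}$ is already $H$-periodic (Lemma~\ref{lem:2hats}). This follows because $|\hat{S}+H|<|\hat{S}|+|H|$ forces $|\hat{S}_Q|+|\hat{S}_R|>|H|$ for every pair of cosets $Q,R\in\phi(\hat{S})$ other than $(H,H)$ (using the symmetry $\hat{S}_{-Q}=-\hat{S}_Q$ for the diagonal pairs), so the prehistoric lemma gives $\hat{S}_Q+\hat{S}_R=Q+R$. Consequently $|j\hat{S}|=|H|\,|j\phi(\hat{S})|$ for all $j\ge 2$, and the growth of $j\phi(\hat{S})$ is obtained by directly iterating the Vosper inequality in $G/H$ --- no appeal to Kneser or to aperiodicity is needed --- with a separate argument at the top value $j=t$, where one only gets $|t\phi(\hat{S})|\ge t|\phi(\hat{S})|-1$ and recovers $|t\hat{S}|\ge t|\hat{S}|-1$ via $|H||\phi(\hat{S})|\ge |\hat{S}|+(|H|-1)/2$. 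If you want to rescue your route, you would essentially have to prove this $H$-periodicity of $2\hat{S}$ anyway, at which point the period $K$ and the Kneser bookkeeping become superfluous.
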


\begin{lemma}
\label{lemme lambda general} 
Let $G$ be a finite abelian group, and let $B,S$ be subsets of $G$ with $|B|=b\le |G|/2$ and $|S|=s\ge 3$.  Assume that $S\cap (-S)=\emptyset$ and $S$ is faithful.  Then
$$\max_{x \in S} \lambda_B(x) > s - \frac{s(s-3)}{b}.$$
\end{lemma}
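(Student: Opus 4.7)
The plan is to extend Olson's method by combining the symmetry $\lambda_B(-x)=\lambda_B(x)$, sub-additivity $\lambda_B(x+y)\le\lambda_B(x)+\lambda_B(y)$, the summation bound~\eqref{eq:clique}, and the faithful expansion $|j\hat{S}|\ge \min(|G|,2sj+1)$.  Set $M:=\max_{x\in S}\lambda_B(x)$.  For $y\in G$, define the length
\[
\tau(y):=\min\{k\ge 0:\, y\in k\hat{S}\},
\]
with the convention $0\hat{S}=\{0\}$.  Since $0\in\hat{S}$, $\lambda_B(0)=0$, and $\lambda_B(-x)=\lambda_B(x)$, sub-additivity immediately gives $\lambda_B(y)\le \tau(y)\,M$ for every $y\in j\hat{S}$ and every $j\ge 1$.

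Fix $j\ge 1$ with $2sj+1\le|G|$ and apply~\eqref{eq:clique} to $C=j\hat{S}\setminus\{0\}$, which has $|j\hat{S}|-1$ elements and does not contain $0$.  This yields
\[
\sum_{y\in C}\lambda_B(y)\ge b\bigl(|j\hat{S}|-b\bigr).
\]
Decomposing $C$ as the disjoint union of the level sets $A_k=k\hat{S}\setminus(k-1)\hat{S}$ (for $k=1,\dots,j$), on which $\tau\equiv k$, and summing by parts gives
\[
\sum_{y\in C}\tau(y)=j\,|j\hat{S}|-\sum_{k=0}^{j-1}|k\hat{S}|.
\]
Combining these with $\lambda_B(y)\le\tau(y)M$ produces the master inequality
\[
M\Bigl(j\,|j\hat{S}|-\sum_{k=0}^{j-1}|k\hat{S}|\Bigr)\ge b\bigl(|j\hat{S}|-b\bigr).
\]
Substituting the faithful lower bounds $|k\hat{S}|\ge 2sk+1$ for $k=0,\dots,j$ collapses this to the explicit estimate
\[
M\cdot sj(j+1)\ge b(2sj+1-b),
\]
and the complementary regime $2sj+1>|G|$ is handled analogously, by replacing $|k\hat{S}|$ by $|G|$ past the saturation threshold.

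The argument is then finished by optimising over integer $j\ge 1$.  The continuous maximiser of the right-hand side is $j^\ast\approx (b-1)/s$, and choosing $j$ to be an integer close to $j^\ast$ gives a real-valued lower bound on $M$ close to $bs/(b+s-1)$; combined with the fact that $M\in\mathbb{Z}$, which rounds the estimate up to the next integer, one concludes the strict inequality $M>s-s(s-3)/b$.  The main obstacle will be this final step: in the regime $b\gtrsim s^2/4$ the real-valued bound falls just short of the target, so the integrality of $M$ must be exploited carefully, together with separate treatment of the saturation boundary where $|j\hat{S}|$ reaches $|G|$.
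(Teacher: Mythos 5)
Your core argument is the same as the paper's: bound $\lambda_B(c)\le j\cdot\max_{x\in S}\lambda_B(x)$ for $c\in j\hat{S}$ using \eqref{eq:-x} and \eqref{eq:x+y}, bound $\sum_{c\in C}\lambda_B(c)$ from below by \eqref{eq:clique}, and feed in the faithful growth of $|j\hat{S}|$. The only structural difference is bookkeeping: you take $C=j\hat{S}\setminus\{0\}$ whole and Abel-sum over the level sets, whereas the paper greedily extracts $2s$ fresh elements from each level plus a remainder $q$ from level $r+1$, so that $|C|=t$ is a free integer parameter; this yields $\alpha\ge 4sb(t-b+1)/(t+s)^2$ for every $1\le t\le |G|-1$, not only for $t$ of the form $|j\hat{S}|-1$. (Also, your substitution of $|j\hat{S}|\ge 2sj+1$ into the master inequality needs a word of justification, since $|j\hat{S}|$ appears on both sides; the monotonicity goes the right way only when $b\ge s(j-1)+1$, which does hold near your $j^{\ast}$.)

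The genuine gap is the final step, which you flag yourself. Integrality of $M$ is not how the paper concludes (it substitutes $t=2b-3$ and finishes with a purely algebraic chain), and it cannot rescue your plan: the best real-valued bound this scheme can produce is essentially $sb/(b+s-1)$, which exceeds the target $s-s(s-3)/b$ only when $b<(s-1)(s-3)/2$. For $s(s-3)\le b<(s-1)^2$ the target forces $M\ge s$ while the bound stays below $s-1$, so rounding up gives at most $M\ge s-1$; for $s=3$ the target is the constant $3$ while the bound never reaches $3$ (and indeed the statement fails for $s=3$, $b\le 3$, since $\lambda_B(x)\le b$). In fairness, the obstacle you identified is real and not resolved by the paper either: the last displayed inequality of the paper's proof, $s\frac{(b-2)}{b}\bigl(1-\frac{s-3}{b}\bigr)>s-\frac{s(s-3)}{b}$, is equivalent to $b<s-3$ and so fails in the relevant range (e.g.\ $s=4$, $b=10$ would give $2.88>3.6$); what the paper's computation actually establishes is only $\alpha\ge 4sb(b-2)/(2b+s-3)^2$, roughly $s-s(s-1)/b$. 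So your proposal reproduces the paper's machinery faithfully but, like the paper, does not close the stated inequality; unlike the paper, you are explicit that the last step is open.
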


We begin with some initial observations that we shall use in our proof of Lemma~\ref{lem:faithful}.  Let $G$ be a finite abelian group and $S\subseteq G$ a generating subset with $|S|=s\ge 3$ and such that $\hat{S}$ has a Vosper-representation.  Recall that $\hat{S}=S\cup \{0\}\cup (-S)$ and let $H$ be a subgroup of $G$ such that
\begin{equation}\label{def}
|\hat{S} + H| < \min(|G|, |\hat{S}| + |H|)
\end{equation}
and with $\phi(\hat{S})$ being a Vosper subset in $G / H$.

We first establish a basic lemma.

\begin{lemma}\label{lem:2hats} $2 \hat{S}$ is $H$-periodic.\end{lemma}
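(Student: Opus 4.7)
The plan is to translate the Hamidoune--Plagne inequality into a small-deficiency condition on $\hat S$, and then fill in each $H$-coset of $2\hat S + H$ using the prehistoric lemma.

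Writing $t := |\phi(\hat S)|$ and $X_Q := \hat S \cap Q$ for each $Q \in \phi(\hat S)$, I would first rewrite the hypothesis $|\hat S + H| < |\hat S| + |H|$ as
$$
D \;:=\; \sum_{Q \in \phi(\hat S)} \bigl(|H| - |X_Q|\bigr) \;=\; |\hat S + H| - |\hat S| \;<\; |H|,
$$
so the total deficiency of $\hat S$ across the $H$-cosets it meets is strictly less than $|H|$. Note also that $t \ge 2$, since $S$ generates $G$ and $|\hat S + H| < |G|$ forces $H \ne G$.

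The core step is: for every $Q \in \phi(2\hat S) = 2\phi(\hat S)$, I want $Q \subseteq 2\hat S$. I would pick a representation $Q = Q_1 + Q_2$ with $Q_1, Q_2 \in \phi(\hat S)$ and, whenever possible, with $Q_1 \ne Q_2$. In that case the two deficiencies $d_{Q_i} := |H| - |X_{Q_i}|$ sit inside distinct cosets, so $d_{Q_1}+d_{Q_2} \le D < |H|$, i.e.,
$$
|X_{Q_1}| + |X_{Q_2}| \;\ge\; 2|H| - D \;>\; |H|.
$$
The second form of the prehistoric lemma, applied inside the cosets $Q_1, Q_2$, then yields $X_{Q_1} + X_{Q_2} = Q_1 + Q_2 = Q$, so $Q \subseteq 2\hat S$.

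The remaining work lies in the ``diagonal'' cases where every representation forces $Q_1 = Q_2$. For $Q = 0$ (which always lies in $2\phi(\hat S)$) a pigeonhole argument on $D < |H|$ over $t \ge 2$ cosets produces some $Q_0 \in \phi(\hat S)$ with $|X_{Q_0}| > |H|/2$; since $X_{-Q_0} = -X_{Q_0}$ has the same size, $|X_{Q_0}| + |X_{-Q_0}| > |H|$ and the prehistoric lemma again gives $H = Q_0 + (-Q_0) \subseteq 2\hat S$ (this works whether or not $Q_0 = -Q_0$). For any non-zero $Q$ already in $\phi(\hat S)$, the decomposition $Q = 0 + Q$ has distinct summands, so the main argument applies. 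The genuinely delicate case, and the one I expect to be the main obstacle, is $Q \in 2\phi(\hat S) \setminus \phi(\hat S)$ with $Q \ne 0$ admitting only the diagonal representation $Q = 2Q_1$ in $\phi(\hat S)^2$; here I would exploit the symmetry $-Q_1 \in \phi(\hat S)$ together with the Vosper property of $\phi(\hat S)$ in $G/H$ (which should exclude such a degenerate configuration by producing a second, non-diagonal representation) and then close the proof with the same prehistoric-lemma argument.
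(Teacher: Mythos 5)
Your overall strategy matches the paper's: convert the Hamidoune--Plagne inequality into the deficiency bound $D=|\hat S+H|-|\hat S|<|H|$ and fill each coset of $2\hat S+H$ via the prehistoric lemma. The non-diagonal cases and the case $Q=H$ are handled correctly. But the final case --- $Q=Q_1+Q_1\ne H$ admitting only the diagonal representation in $\phi(\hat S)\times\phi(\hat S)$ --- is a genuine gap, and the fix you sketch does not work: the Vosper property does \emph{not} exclude this configuration. For instance $\phi(\hat S)=\{0,\pm 1,\pm 5\}$ in $\mathbb{Z}/p\mathbb{Z}$ with $p$ large is a Vosper subset (it is not an arithmetic progression), yet $2=1+1$ is the unique way to write $2$ as a sum of two of its elements. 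So you cannot count on producing a second, non-diagonal representation.

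The correct resolution is elementary, and you already use the key ingredient elsewhere: by the symmetry of $\hat S$ one has $|X_{Q_1}|=|X_{-Q_1}|$, and since $Q=2Q_1\ne H$ forces $Q_1\ne -Q_1$, the cosets $Q_1$ and $-Q_1$ are distinct, so $d_{Q_1}+d_{-Q_1}\le D<|H|$ yields $2|X_{Q_1}|=|X_{Q_1}|+|X_{-Q_1}|>|H|$. The prehistoric lemma then applies directly to the diagonal sum $X_{Q_1}+X_{Q_1}$, giving $Q\subseteq 2\hat S$. This is precisely the paper's ``Fact 2''; with it your argument closes (and it also replaces your pigeonhole step for $Q=H$, since any $Q_0\ne H$ in $\phi(\hat S)$ gives $H=Q_0+(-Q_0)\subseteq 2\hat S$ by the same count).
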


As we shall see, Lemma~\ref{lem:2hats} is an elementary consequence of \eqref{def}.  We choose to write $\hat{S}_Q$ for $\hat{S}\cap Q$ for each coset $Q$ of $H$.  So that
\[
\hat{S}=\bigcup_{Q\in \phi(\hat{S})} \hat{S}_Q\, .
\]
The following two facts, together with the prehistoric lemma, are all that we require to deduce Lemma~\ref{lem:2hats}.  Equation \eqref{def} is used in the proof of each of the facts.

\medskip
\textbf{Fact 1.} If $Q,R \in \phi(\hat{S})$ are two $H$-cosets with $Q \neq R$, then 
\begin{eqnarray*}
\left|\hat{S}_Q\right| + \left|\hat{S}_R\right| & \ge & 2|H|-|(\hat{S} + H)\setminus \hat{S}|\\ 
                                                    & > & \left|H\right| \, .
\end{eqnarray*} 

\medskip
\textbf{Fact 2.} If $Q\in \phi(\hat{S})$ is an $H$-coset with $Q \neq H$, then Fact 1 implies
\begin{eqnarray*}
\left|\hat{S}_Q\right| + \left|\hat{S}_Q\right| &    =   & \left|\hat{S}_Q\right| + \left|\hat{S}_{-Q}\right|\\ 
                                                        &  > & |H|\, .
\end{eqnarray*}

\begin{proof}[Proof of Lemma \ref{lem:2hats}] By Facts 1 and 2 we have that
\[
|\hat{S}_Q|+|\hat{S}_R|>|H|
\]
for all pairs $Q,R\in \phi(\hat{S})$ other than $(Q,R)=(H,H)$.  It follows by the prehistoric lemma that
\[
\hat{S}_Q+\hat{S}_R=\hat{S}_Q +\hat{S}_R +H
\]
for all pairs $Q,R\in \phi(\hat{S})$ other than $(Q,R)=(H,H)$.  Since $H$ may be represented by $Q+(-Q)$ for any $Q\in \phi(\hat{S})\setminus \{H\}$, this establishes that $2\hat{S}=2\hat{S}+H$, i.e., $2\hat{S}$ is $H$-periodic, as required.\end{proof}

An immediate consequence of Lemma~\ref{lem:2hats} is that $j\hat{S}$ is $H$-periodic for all $j\ge 2$.  It then follows that $j\hat{S}$ consists precisely of all elements that belong to $H$-cosets $Q\in j\phi(\hat{S})$.  In particular
\begin{equation}\label{eq:hfois}
|j\hat{S}|=|H||j\phi(\hat{S})|\qquad \text{for all} \, j\ge 2\, .
\end{equation}

\medskip
Now, we prove that $S$ is faithful.

\begin{proof}[Proof of Lemma~\ref{lem:faithful}]
Let $t \in \mathbb{N}$ be the greatest integer such that $t\hat{S} \neq G$.  It is immediate that $S$ is faithful in the case that $t=1$.  
In the case that $t\ge 2$ we shall in fact prove that
\[
|j\hat{S}|\ge 
\begin{cases} 
j|\hat{S}| &\qquad \text{for } j=1,\dots ,t-1\\
j|\hat{S}|-1 & \qquad \text{for } j=t \\
|G| &\qquad \text{for } j>t \, 
\end{cases} 
\]
which clearly implies that $S$ is faithful.
The claimed bound is trivial for $j>t$ (by the definition of $t$).  For $j=1,\dots ,t-1$ we note that the required bounds follow directly from~\eqref{eq:hfois} and the bounds
\[
|j\phi(\hat{S})|\ge j|\phi(\hat{S})|\qquad j=1,\dots ,t-1\, ,
\]
which we now prove by induction on $j$.  The base case $j=1$ is trivial.  For $j=2,\dots ,t-1$, we obtain by the Vosper property of $\phi(\hat{S})$ and the induction hypothesis that
\[
|j\phi(\hat{S})|=|(j-1)\phi(\hat{S})+\phi(\hat{S})|\ge \min(|G/H|-1,j|\phi(\hat{S})|)\, .
\]
If the bound $|j\phi(\hat{S})|\ge j|\phi(\hat{S})|$ is obtained then the proof of the induction step is complete, so we may assume for contradiction that $|j\phi(\hat{S})|=|G/H|-1$.  However, in this case $|j\phi(\hat{S})|+|\phi(\hat{S})|>|G/H|$, and so $(j+1)\phi(\hat{S})=G/H$ by the prehistoric lemma.  It follows that $(j+1)\hat{S}=G$, a contradiction since $j+1\le t$.  For the remaining case that $j=t$ we use the Vosper property of $\phi(\hat{S})$ and the result $|(t-1)\phi(\hat{S})|\ge (t-1)|\phi(\hat{S})|$ obtained above to give that
\begin{equation}\label{eq:tphi}
|t\phi(\hat{S})|\ge |(t-1)\phi(\hat{S})+\phi(\hat{S})|\ge \min(|G/H|-1,t|\phi(\hat{S})|)\, .
\end{equation}
If $t|\phi(\hat{S})|\le |G/H|-1$ then the minimum is attained at $t|\phi(\hat{S})|$ and the claimed bound is proved.  If $t|\phi(\hat{S})|>|G/H|$ then $|\phi(\hat{S})|+|(t-1)\phi(\hat{S})|>|G/H|$, and so $t\phi(\hat{S})=G/H$ by the prehistoric lemma, and $t\hat{S}=G$, a contradiction.  In the remaining case we have $|G/H|-1<t|\phi(\hat{S})|\le |G/H|$, and so~\eqref{eq:tphi} gives us that $|t\phi(\hat{S})|\ge t|\phi(\hat{S})|-1$.  Combining this fact with the observation $|H||\phi(\hat{S})|\ge |\hat{S}|+(|H|-1)/2$ (which follows from Claim I of Section~\ref{sec:APcase}, for example) we obtain that 
\begin{eqnarray*}
|t\hat{S}|  &   =    & |H||t\phi(\hat{S})|\\
                 & \ge   & |H|(t|\phi(\hat{S})|-1)\\
                 & \ge & t|\hat{S}|+t\left(\frac{|H|-1}{2}\right)-|H|\\
                 & \ge & t|\hat{S}| -1.
\end{eqnarray*}
\end{proof}

Having established that $S$ is faithful we now prove that this is sufficient to guarantee the required bound on $\max_{x\in S}\lambda_B(x)$.  The proof follows (more or less step by step) the proof of \cite[Lemma 3.1]{HLS08}.

\begin{proof} [Proof of Lemma~\ref{lemme lambda general}]
We write
$$\alpha=\displaystyle\max_{x \in S} \lambda_B(x)\, ,$$ 
and note that in fact $\lambda_B (x)\le \alpha$ for all $x\in \hat{S}$.
Let $t \le |G| - 1$ be a positive integer and set 
$$t=2rs+q, \quad \text{ where } 0 \le q\le 2s-1.$$
Since $S$ is faithful the bounds $|j\hat{S}\setminus \{0\}|\ge \min(|G|-1,2js) \ge 2js$ for $j=1,\dots , r$, and $|(r+1)\hat{S}\setminus \{0\}|\ge \min(|G|-1,2(r+1)s)\ge t$ hold.  Hence one may select a sequence of disjoint sets $C_j\ssq G\setminus \{0\}\, :j=1,\dots ,r+1$ such that $C_j\ssq j\hat{S}$ for each $j=1,\dots ,r+1$, and with $|C_j|=2s\, :j=1,\dots ,r\, ,\, |C_{r+1}|=q$.  Set $C=\bigcup_{j=1}^{r+1}C_j$, and note that $C\ssq G\setminus\{0\}$ has cardinality $t=2rs+q$.  Our proof of the lemma proceeds via proving upper and lower bounds on the quantity $\sum_{c\in C}\lambda_B (c)$.

The lower bound on $\sum_{c\in C}\lambda_B (c)$ is given immediately by Lemma~\ref{Proprietes de lambda}:
\[
\sum_{c\in C}\lambda_B (c)\ge |C||B|-|B|^2+|B|\, =tb-b^2+b\, .
\]

For the upper bound on $\sum_{c\in C}\lambda_B (x)$ we use the sub-additivity of $\lambda_B (x)$ ensured by Lemma~\ref{Proprietes de lambda}.  Each element $c\in C_j\ssq j\hat{S}$ may be expressed as a sum 
\[
c = x_1 + \dots + x_j
\]
where $x_1,\dots,x_j$ are (not necessarily distinct) elements of $\hat{S}$, and so, by the sub-additivity of $\lambda_B (x)$ (Lemma~\ref{Proprietes de lambda}), we have $\lambda_B (c)\le \lambda_B (x_1) + \dots + \lambda_B (x_j) \le j\alpha$.  It follows that
\[
\sum_{c\in C_j} \lambda_B (c)\le |C_j|j\alpha \qquad j=1,\dots ,r+1\, ,
\]
and 
\begin{eqnarray*}
 \displaystyle\sum_{c \in C}\lambda_B(c) & \le & \sum_{j=1}^{r+1}|C_j|j\alpha\\
                                  & = & \alpha\sum_{j=1}^{r}2js +\alpha q(r+1)\\ 
                                                                          &   =  & \alpha(r+1)(rs+q) \\
                                                                          &   =  & \frac{\alpha(t-q+2s)(t+q)}{4s}\\                      
                                                                          &   \le  & \frac{\alpha(t+s)^2}{4s}\, ,
\end{eqnarray*}
where the final inequality follows since the penultimate expression is maximised when $q=s$.

Combining our bound on $\sum_{c \in C}\lambda_B(c)$ yields the inequality
\[
\alpha \ge \frac{4sb(t-b+1)}{(t+s)^2}\, .
\]
In particular, since $2b-3 \le |G|-1$, we may set $t=2b-3$.  It follows that
\begin{eqnarray*}
\alpha & \ge  & \frac{4sb(b-2)}{(2b-3+s)^2}\\
            &  \ge &  s\left(\frac{b-2}{b}\right)\left(1-\frac{s-3}{b}\right)\\                        
            &   >   & s - \frac{s(s-3)}{b},
\end{eqnarray*}
where we have used $s\ge 3.$
\end{proof}

\subsection{The stronger bound in the case that $|G|$ is odd: A proof of Lemma~\ref{lem:VI}}\label{sec:VI}

Lemma~\ref{lem:VI} is effectively a strengthening of Lemma~\ref{lem:VG} established in the previous section, so it should not be surprising that the proof has many similarities to that given above.  Proving stronger bounds on the cardinalities $|j\hat{S}|:j\ge 2$ (in fact bounds identical to those proved by Olson in $\mathbb{Z}_p$) is an essential improvement.  The proof of Lemma~\ref{lem:VI} is completed by proving the following two lemmas.  A subset $A$ of $G$ will be called \em super faithful \em if, for every integer $j \ge 1$, one has 
$$|j\hat{A}| \ge \min\left(|G|,j(|\hat{A}|+1) -1\right).$$

\begin{lemma}\label{lem:superfaithful} Let $G$ be a finite abelian group of odd order, and let $S\ssq G$.  Assume $S$ generates $G$ and that $\hat{S}$ has a Vosper-representation.  Then $S$ is super faithful.\end{lemma}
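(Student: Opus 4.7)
The plan is to mimic the strategy of Lemma~\ref{lem:faithful} while exploiting the extra parity constraint that becomes available when $|G|$ is odd. First I would apply Theorem~\ref{thm:HP04} to obtain a subgroup $H\le G$ with $|\hat S+H|<|\hat S|+|H|$ and $\phi(\hat S)$ a Vosper subset of $G/H$. Lemma~\ref{lem:2hats} (whose proof used no parity assumption) gives that $2\hat S$, and hence every $j\hat S$ with $j\ge 2$, is $H$-periodic, so
\[
|j\hat S|\,=\,|H|\cdot|j\phi(\hat S)|\qquad(j\ge 2),
\]
and the problem reduces to a sumset estimate inside $G/H$.

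The key new ingredient is a parity observation. Since $|G|$ is odd, so is $|G/H|$, and no nonzero element of $G/H$ equals its own inverse. Consequently, every symmetric subset $X\ssq G/H$ containing $0$ decomposes into $\{0\}$ plus a disjoint union of pairs $\{x,-x\}$, hence has odd cardinality. Setting $A:=\phi(\hat S)$, this applies both to $A$ and to every sumset $jA$ (each of which is symmetric and contains $0$), so $m:=|A|$ and each $|jA|$ are odd. Since $|G/H|-1$ is even, any inequality forcing $|jA|\ge |G/H|-1$ automatically upgrades to $jA=G/H$. The degenerate cases ($G$ trivial, or $m=1$) are easily handled separately, so I may assume $m\ge 3$.

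With these tools I would prove by induction on $j\ge 1$ that either $jA=G/H$ or $|jA|\ge jm+(j-1)$. The base case is immediate. For the inductive step, the Vosper property applied to $(j-1)A+A$ yields
\[
|jA|\,\ge\,\min\bigl(|G/H|-1,\,jm+(j-2)\bigr),
\]
and, since $m$ is odd, the right-hand bound $jm+(j-2)$ is even; the parity of $|jA|$ then forces $|jA|\ge jm+(j-1)$ in the unsaturated case and $jA=G/H$ in the saturated one.

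Finally I would lift from $G/H$ back to $G$. For $j\ge 2$, if $jA=G/H$ then the $H$-periodicity of $j\hat S$ gives $j\hat S=G$; otherwise
\[
|j\hat S|\,=\,|H|\cdot|jA|\,\ge\,|H|\bigl(jm+(j-1)\bigr)\,\ge\,j|\hat S|+(j-1),
\]
using $|\hat S|\le m|H|$ and $|H|\ge 1$, which is exactly the desired $j(|\hat S|+1)-1$. The main obstacle is identifying and deploying the parity argument correctly: once one recognises that symmetric $0$-containing subsets of odd-order groups have odd cardinality, each Vosper estimate buys one extra element, and these increments accumulate linearly in $j$ to give the one-per-step improvement over Lemma~\ref{lem:faithful}.
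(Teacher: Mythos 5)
Your proposal is correct and follows essentially the same route as the paper: reduce to $G/H$ via the $H$-periodicity of $j\hat S$ from Lemma~\ref{lem:2hats}, run an induction using the Vosper property of $\phi(\hat S)$, and gain one extra element per step from the observation that symmetric subsets of the odd-order group $G/H$ containing $0$ have odd cardinality. The only difference is organisational (you phrase the induction as a dichotomy ``$jA=G/H$ or the bound holds'' rather than working below the threshold $t$ as the paper does), which changes nothing of substance.
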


\begin{lemma}
\label{lemme lambda} 
Let $G$ be a finite abelian group of odd order, and let $B,S$ be subsets of $G$ with $|B|=b\le |G|/2$ and $|S|=s\ge 3$.  Assume that $S\cap (-S)=\emptyset$ and $S$ is super faithful.  Let also $t$ be an integer, $1 \le t \le |G|-1$, and set 
$$t=r(2s+2)+q,  \text{ where } -1 \le q \le 2s.$$
Then, there exists $x \in S$ such that
$$\lambda_B(x) \ge \frac{4(s+1)b(t-b+1)}{t(t+2s+6)+q(2s-q-2)}.$$
\end{lemma}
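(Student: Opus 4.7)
The plan is to establish Lemma~\ref{lem:VI} as an immediate consequence of the two intermediate results Lemma~\ref{lem:superfaithful} and Lemma~\ref{lemme lambda}, in exact parallel to the way Lemma~\ref{lem:VG} is deduced from Lemmas~\ref{lem:faithful} and~\ref{lemme lambda general}. The first asserts the super faithfulness of $S$; the second translates super faithfulness into the required bound on $\max_{x \in S} \lambda_B(x)$.

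For Lemma~\ref{lem:superfaithful}, I would follow the template of the proof of Lemma~\ref{lem:faithful}. Let $H$ be the subgroup given by the Hamidoune-Plagne theorem, so that $\phi(\hat S)$ is a symmetric Vosper subset of $G/H$ containing $0$; by Lemma~\ref{lem:2hats}, $j\hat S$ is $H$-periodic for all $j \ge 2$, so that $|j\hat S| = |H| \cdot |j\phi(\hat S)|$. The essential new ingredient is that $|G|$ odd forces $|H|$ (and hence $|G/H|$) odd. This rules out non-trivial $2$-cosets in $G$, so that the pairing $Q \leftrightarrow -Q$ in $\phi(\hat S)$ is free on every non-trivial coset. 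Combined with Fact~2 from Section~\ref{sec:VG}, this upgrades the estimate $|\hat S_Q|>|H|/2$ to the integer bound $|\hat S_Q|\ge (|H|+1)/2$, which yields the stronger per-factor improvement needed to reach the super-faithful conclusion $|H|\cdot |j\phi(\hat S)| \ge j(|\hat S| + 1) - 1$ after iterating the Vosper expansion $|j\phi(\hat S)| \ge \min(|G/H| - 1, j|\phi(\hat S)|)$ as in the last steps of the proof of Lemma~\ref{lem:faithful}. The $H$-periodic edge case (where $\hat S$ is itself a union of $H$-cosets) I would handle by induction on $|G|$, applying the same lemma to the symmetric Vosper subset $\phi(\hat S)$ of the smaller odd-order group $G/H$.

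For Lemma~\ref{lemme lambda}, I would mimic the double counting argument in the proof of Lemma~\ref{lemme lambda general}. Set $\alpha = \max_{x \in S}\lambda_B(x)$ and write $t = r(2s+2) + q$ with $-1 \le q \le 2s$; the super faithfulness of $S$ allows us to select disjoint sets $C_1, \ldots, C_{r+1} \subseteq G \setminus \{0\}$ with $C_j \subseteq j\hat S$ and $|C_1|=2s$, $|C_j| = 2s+2$ for $2 \le j \le r$, $|C_{r+1}| = q+2$ (the edge case $r = 0$ being covered by $|C_1| = t$), so that their union $C$ has cardinality $t$. The clique inequality from Lemma~\ref{Proprietes de lambda} yields $\sum_{c \in C}\lambda_B(c) \ge b(t-b+1)$, while sub-additivity of $\lambda_B$ and the fact that $\lambda_B(x) \le \alpha$ for all $x \in \hat S$ give
\[
\sum_{c \in C}\lambda_B(c)\le \alpha\sum_{j=1}^{r+1}j|C_j|.
\]
A direct calculation (using $t = 2r(s+1) + q$) simplifies $\sum_{j=1}^{r+1}j|C_j|$ to the compact form $\bigl(t(t+2s+6)+q(2s-q-2)\bigr)/(4(s+1))$, and combining this with the clique lower bound produces the claimed inequality for $\alpha$.

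The main obstacle will be the super-faithfulness step, where the improvement from the gap $(|H|-1)/2$ used in Lemma~\ref{lem:faithful} to the sharper gain of $+1$ per unit of $j$ requires the odd-order hypothesis in a subtle way (no non-trivial $2$-cosets, and strict integer rounding in the Fact~2 estimate). The $H$-periodic case in particular calls for an inductive argument that does not appear in the proof of Lemma~\ref{lem:faithful}. Once super faithfulness is established, the deduction of Lemma~\ref{lemme lambda} is essentially bookkeeping; the only subtlety is the asymmetric choice $|C_1| = 2s$ (forced by $|\hat S \setminus \{0\}| = 2s$), with the compensating offset $|C_{r+1}| = q + 2$ absorbing the deficiency.
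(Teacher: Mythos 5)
Your argument for this lemma --- the double count of $\sum_{c\in C}\lambda_B(c)$ over disjoint sets $C_j\subseteq j\hat S\setminus\{0\}$ with $|C_1|=2s$, $|C_j|=2s+2$ for $2\le j\le r$, $|C_{r+1}|=q+2$ (and $|C_1|=t$ when $r=0$), bounded below by the clique inequality and above by sub-additivity, with the algebraic simplification to $\bigl(t(t+2s+6)+q(2s-q-2)\bigr)/\bigl(4(s+1)\bigr)$ --- is correct and essentially identical to the paper's proof. Note only that super-faithfulness is a hypothesis of this statement, so the portions of your proposal devoted to establishing Lemma~\ref{lem:superfaithful} are not needed here.
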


We proceed directly to the proof of the lemmas.

\begin{proof}[Proof of Lemma~\ref{lem:superfaithful}]
Let $G$ be a finite abelian group of odd order and let $S\ssq G$ be a generating subset with $|S|=s\ge 3$ and such that $\hat{S}$ has a Vosper-representation.  Let $H$ be a subgroup of $G$ such that
\begin{equation}\label{defI}
|\hat{S} + H| < \min(|G|, |\hat{S}| + |H|)
\end{equation}
and with $\phi(\hat{S})$ being a Vosper subset in $G / H$.
Let $t \in \mathbb{N}$ be the greatest integer such that $t\hat{S} \neq G$.  It is immediate that $S$ is super faithful in the case that $t=1$.  Thus we may assume that $t\ge 2$.  Using the fact, established in Section~\ref{sec:VG}, that $j\hat{S}$ is $H$-periodic for all $j\ge 2$ it suffices to prove that
\begin{equation}\label{jphihats}
|j\phi(\hat{S})| \ge j (|\phi(\hat{S})|+1)-1
\end{equation}
for $j=2,\dots,t$, as this implies
\[
|j\hat{S}|=|H||j\phi(\hat{S})|\ge j|\phi(\hat{S})||H| + (j-1)|H|\ge j|\hat{S}|+(j-1)\, .
\] 
We prove that \eqref{jphihats} holds by induction on $j$, using the Vosper property of $\phi(\hat{S})$ and a parity trick of Olson.
Note that \eqref{jphihats} trivially holds for $j=1$. 
For $j=2,\dots ,t$, we obtain by the Vosper property of $\phi(\hat{S})$ that
$$|j\phi(\hat{S})| \ge \min\left(|G/H|-1, |(j-1)\phi(\hat{S})| + |\phi(\hat{S})|\right).$$

\medskip
Since $|G|$ is odd, so is $|G/H|$. In addition, the fact that $j\phi(\hat{S})$ is symmetric and contains $0$ implies that $|j\phi(\hat{S})|$ is odd. 
Thus, $|j\phi(\hat{S})| \ge |G/H|-1$ cannot occur, otherwise we would have $|j\phi(\hat{S})| \ge |G/H|$, so that $j\phi(\hat{S}) = G/H$, which implies $j\hat{S} = G$, a contradiction. 

\medskip
Therefore, 
$$|j\phi(\hat{S})| \ge |(j-1)\phi(\hat{S})| + |\phi(\hat{S})|.$$ 
Then, the induction hypothesis, and the very same argument of parity again ($|j\phi(\hat{S})|$ is odd), yields
\begin{eqnarray*}
|j\phi(\hat{S})| & \ge & |(j-1)\phi(\hat{S})| + |\phi(\hat{S})| + 1\\
                         & \ge & (j-1)\left(|\phi(\hat{S})| + 1\right) -1 + |\phi(\hat{S})| + 1\\
                         &    =    & j\left(|\phi(\hat{S})| + 1\right) -1,
\end{eqnarray*}
as required.
\end{proof}

\begin{proof} [Proof of Lemma~\ref{lemme lambda}]
We write
$$\alpha=\displaystyle\max_{x \in S} \lambda_B(x)\, ,$$ 
and note that in fact $\lambda_B (x)\le \alpha$ for all $x\in \hat{S}$.
Now, let $t$ be as in the statement of the lemma.  One can distinguish the following two cases.

\medskip
$\bullet$
If $t \le 2s$, then $r=0$ and $q=t$, let $C$ consist of $t$ elements in $\hat{S}\setminus \{0\}$. Thus, we obtain
$$\displaystyle\sum_{c \in C} \lambda_B(c) \le \alpha t = \alpha\left(\frac{t(t+2s+6)+q(2s-q-2)}{4(s+1)}\right).$$

\medskip
$\bullet$
If $t \ge 2s+1$, then $r \ge 1$.  Since $S$ is super faithful the bounds $|j\hat{S}\setminus \{0\}|\ge \min(|G|-1,j(2s+2)-2) \ge j(2s+2)-2$ for $j=1,\dots , r$, and $|(r+1)\hat{S}\setminus \{0\}|\ge \min(|G|-1,(r+1)(2s+2)-2)\ge t$ hold.  Hence one may select a sequence of disjoint sets $C_j\ssq G\setminus \{0\}\, :j=1,\dots ,r+1$ such that $C_j\ssq j\hat{S}$ for each $j=1,\dots ,r+1$, and with $|C_j|=2s\, :j=1\, ,\,|C_j|=2s+2\, :j=2,\dots ,r\, ,\, |C_{r+1}|=q+2$.  Set $C=\bigcup_{j=1}^{r+1}C_j$, and note that $C\ssq G\setminus\{0\}$ has cardinality $t=r(2s+2)+q$.  Our proof of the lemma proceeds via proving upper and lower bounds on the quantity $\sum_{c\in C}\lambda_B (c)$.

The lower bound on $\sum_{c\in C}\lambda_B (c)$ is given immediately by Lemma~\ref{Proprietes de lambda}:
\[
\sum_{c\in C}\lambda_B (c)\ge |C||B|-|B|^2+|B|\, =tb-b^2+b\, .
\]

For the upper bound on $\sum_{c\in C}\lambda_B (c)$ we use the sub-additivity of $\lambda_B (x)$ (as in the proof of Lemma~\ref{lemme lambda general}) which gives us that $\lambda_B (c)\le j\alpha$ for all $c\in C_j$.  It follows that 
\begin{eqnarray*}
 \displaystyle\sum_{c \in C}\lambda_B(c) & \le &  2s \alpha + 2(2s+2) \alpha + \cdots + r(2s+2) \alpha + (q+2)(r+1)\alpha \\
                      &   =  & \frac{\alpha}{2}\left( r(r+1)(2s+2) + 2(q+2)(r+1)-4\right) \\
                      &   =  & \frac{\alpha}{2}\left( (r+1)(t+q+4) -4 \right)\\                      
                      &   =  & \alpha\left(\frac{t(t+2s+6)+q(2s-q-2)}{4(s+1)}\right)\, .
\end{eqnarray*}
Combining our bound on $\sum_{c \in C}\lambda_B(c)$ yields the inequality
\[
\alpha \ge \frac{4(s+1)b(t-b+1)}{t(t+2s+6)+q(2s-q-2)}\, ,
\]
as required.
\end{proof}

\end{document}